\newtheorem{thm}{Theorem}[section]
\newtheorem{prop}[thm]{Proposition}
\newtheorem{cor}[thm]{Corollary}
\newtheorem{question}[thm]{Question}
\newtheorem{notation}[thm]{Notation}
\theoremstyle{remark}
\newtheorem{remark}[thm]{Remark}
\newtheorem{ex}[thm]{Example}
\theoremstyle{definition}
\newtheorem{definition}[thm]{Definition}
\begin{document}
	
	\title[]{On $C^0$-stability of compact leaves with amenable fundamental group}
	%
	%
	%
\author{Sam Nariman}
\address{Department of Mathematics\\
  Purdue University\\
150 N. University Street\\
West Lafayette, IN 47907-2067\\
}
\email{snariman@purdue.edu}
\author{Mehdi Yazdi}
\address{Department of Mathematics\\
King's College London \\
Strand, London, WC2R 2LS}
\email{mehdi.yazdi@kcl.ac.uk}
	\maketitle
	
	\begin{abstract} In his work on the generalization of the Reeb stability theorem (\cite{MR356087}), Thurston conjectured that if the fundamental group of a compact leaf $L$ in a codimension-one transversely orientable foliation is amenable and if the first cohomology group $H^1(L;\mathbb{R})$ is trivial, then $L$ has a neighborhood foliated as a product. This was later proved as a consequence of Witte-Morris' theorem on the local indicability of amenable left orderable groups and Navas' theorem on the left orderability of the group of germs of orientation-preserving homeomorphisms of the real line at the origin. In this note, we prove that Thurston's conjecture also holds for any foliation that is sufficiently close to the original foliation. Hence, if the fundamental group $\pi_1(L)$ is amenable and $H^1(L;\mathbb{R})=0$, then for every transversely orientable codimension-one foliation $\mathcal{F}$ having $L$ as a leaf, there is a neighborhood of $\mathcal{F}$ in the space of $C^{1,0}$ foliations with Epstein $C^0$ topology consisting entirely of foliations that are locally a product $L \times \mathbb{R}$. 

	\end{abstract}
	
	\section{Introduction} 
A codimension-$m$ \emph{foliation} of an $n$-dimensional manifold $M$ is a decomposition of $M$ into injectively immersed $(n-m)$-dimensional submanifolds such that, locally the submanifolds form a product $\mathbb{R}^{n-m} \times \mathbb{R}^m$ by slices $\mathbb{R}^{n-m} \times \{ \text{point}\}$. The connected components of the submanifolds in this decomposition are called \emph{leaves}. Thus, locally $\mathcal{F}$ is given by an atlas whose transition functions preserve the planes $\mathbb{R}^{n-m}\times \{\text{point}\}$. If the transition functions are $C^k$ with respect to $\mathbb{R}^{n-m}$ coordinates and $C^r$ with respect to $\mathbb{R}^m$ coordinates, then the foliation is of \emph{class $C^{k, r}$}. By a $C^r$ foliation, we mean a foliation of class $C^{r,r}$. 

Just as the dynamics of a function heavily depend on its regularity class, the global properties of a foliation are influenced by its regularity class. For example, Denjoy's theorem (\cite{denjoy1932courbes}) as well as Kopell's Lemma (\cite{kopell2006commuting}) can each be used to give examples of codimension-one foliations that are not topologically conjugate to any $C^2$ foliation. Harrison's results on nonsmoothable diffeomorphisms (\cite{harrison1975unsmoothable, harrison1979unsmoothable}) imply, via the mapping torus construction, that for every non-negative integer $r \geq 0$ and every codimension $m \neq 1,4$, there are codimension-$m$ $C^r$ foliations that are not topologically conjugate to any $C^{r+1}$ foliation.   Cantwell and Conlon (\cite{cantwell1988smoothability}) showed that there are codimension-one $C^0$ foliations that are not topologically conjugate to any $C^1$ foliations. Independently, Tsuboi (\cite{tsuboi1987examples}) showed that for each non-negative integer $r \geq 0$, there are $C^r$ actions of the finitely presented group $\langle a, b | [a, [a,b]]=1 \rangle $ on the interval that are not topologically conjugate to any $C^{r+1}$ action.  See \cite{deroin2007dynamique, calegari2008nonsmoothable, navas2010finitely, bonatti2019hyperbolicity, kim2020diffeomorphism, kim2021direct} for more recent results on the degree of regularity for group actions on $1$-manifolds.

A compact leaf $L$ of a codimension-$m$ foliation $\mathcal{F}$ is \emph{globally stable} if every leaf of $\mathcal{F}$ is diffeomorphic to $L$ and has a neighborhood that is foliated as a product $L \times \mathbb{R}^m$ by leaves $L \times \text{point}$. For codimension-one transversely orientable foliations on compact connected manifolds, the existence of a globally stable leaf $L$ implies that $\mathcal{F}$ is either the product foliation $L \times [0,1]$ or a fibration over the circle.  The stability of compact or proper leaves of foliations, especially in codimension-one, has been extensively studied. See \cite{MR0055692, MR356087, MR461523, plante1983stability, MR682061, fukui1986stability, MR935527, bonatti1990stabilite, bonatti1990diffeomorphismes, tsuboi1994hyperbolic, fukui2000codimension, MR1978886, MR2609248, MR2917081, MR4002563} and references therein. The classical theorem of Reeb stability states that for a codimension-one transversely orientable foliation $\mathcal{F}$ on a compact connected manifold $M$, a compact leaf $L$ with finite $\pi_1(L)$ is globally stable. Thurston (\cite{MR356087}) vastly generalized Reeb's stability theorem, by showing that for a codimension-one transversely orientable $C^1$ foliation $\mathcal{F}$ of a connected manifold, a compact leaf $L$ satisfying $H^1(L;\mathbb{R})=0$ is globally stable. 

Langevin and Rosenberg (\cite{MR461523}) observed that Thurston's proof is invariant under $C^1$-perturbations. Therefore, if $\mathcal{F}'$ is $C^1$-close to the foliation $\mathcal{F}$, then $\mathcal{F}'$  has a compact leaf $L'$ homeomorphic to $L$, and $L'$ is also globally stable. We shall briefly recall in \Cref{Epstein} what an open neighborhood in the $C^r$-topology on the space of foliations defined by Epstein looks like. See \cite{MR0501006} for more detail. Epstein (\cite{MR0501006}) defined this topology for integers $r \geq 1$ or $r = \infty$, but the same definition can be done for $r = 0$. 


While the Reeb stability theorem holds for $C^0$ foliations, Thurston's stability is not true for $C^0$ foliations in this generality. In fact, Thurston gave an example of a codimension-one foliation on $L \times S^1$ with only one compact leaf homeomorphic to $L$, where $L$ is a homology $3$-sphere whose fundamental group is $\langle a,b,c| a^2=b^3=c^7=abc\rangle$. Hence, he suggested finding a characterization of compact leaves satisfying Reeb's stability in the $C^0$ case. He wrote that it would be ``reasonable to conjecture that if $\pi_1(L)$ is amenable, and if $L$ has a nontrivial $C^0$-foliated neighborhood, then $H^1(L;\mathbb{R})\neq 0$". This conjecture is now known to be true as a consequence of Witte-Morris' theorem (\cite{MR2286034}) and Navas' theorem (\cite[Remark 1.1.13]{deroin2014groups} or \cite[Proposition 3]{MR3369358}): Let the \emph{holonomy group of $L$} be the image of $\pi_1(L)$ under the holonomy homomorphism. Then the holonomy group of $L$ is amenable since it is a quotient of the amenable group $\pi_1(L)$. On the other hand, the holonomy group of $L$ is a subgroup of the group $\utilde{\mathrm{Homeo}}_+(\mathbb{R}, 0)$ of germs of orientation-preserving homeomorphisms of the real line at the origin, which is a left orderable group by Navas' theorem. Hence, the holonomy group of such a compact leaf is amenable and left orderable. Therefore, by Witte-Morris' theorem, it has to be locally indicable which implies that $H^1(L;\mathbb{R})\neq 0$.
	
Our main theorem is to prove the stability of compact leaves with an amenable fundamental group under $C^0$-perturbations. Two foliations $\mathcal{F}$ and $\mathcal{F}'$ on a manifold $M$ are \emph{topologically equivalent} if there is a homeomorphism of $M$ that sends the leaves of $\mathcal{F}$ to the leaves of $\mathcal{F}'$. 

\begin{thm}
Let $M$ be a compact connected smooth manifold. Let $\mathcal{F}$ be a transversely oriented codimension-one $C^{1,0}$ foliation on $M$ with a compact leaf $L$ such that $\pi_1(L)$ is amenable and $H^1(L;\mathbb{R}) = 0$. Then any $C^{1,0}$ foliation that is $C^0$ close to $\mathcal{F}$ (i.e. in Epstein $C^0$ topology) is topologically equivalent to $\mathcal{F}$. 
\label{thm:main}
\end{thm}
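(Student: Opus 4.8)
The plan is to reduce the perturbation statement to a fixed-point problem for an action on a circle and then to invoke the ($C^0$) Thurston conjecture a second time, now for the perturbed foliation. First I would record the structure of $\mathcal{F}$ itself. Since $\pi_1(L)$ is amenable and $H^1(L;\mathbb{R})=0$, the holonomy group of $L$ is amenable and left orderable, hence locally indicable by Witte--Morris; but a nontrivial finitely generated locally indicable group surjects onto $\mathbb{Z}$, which would give a nonzero class in $H^1(L;\mathbb{R})$. Hence the holonomy group is trivial, $L$ has a product neighborhood and is globally stable, so by the structure theorem quoted in the introduction $\mathcal{F}$ is either the product foliation on $L\times[0,1]$ or a fibration $p\co M\to S^1$ with fiber $L$; in either case $\mathcal{F}$ has no holonomy and its leaf space is the transverse $1$-manifold. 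I will treat the closed (fibration) case, the bounded product case being identical and easier.

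Next I would encode a $C^0$-nearby $\mathcal{F}'$ by its transverse holonomy. Fixing a transversal circle $T\cong S^1$ for $\mathcal{F}$ (a section of $p$) and using that $\mathcal{F}'$ is close to $\mathcal{F}$ in the Epstein $C^0$ topology, the holonomy of $\mathcal{F}'$ along loops in a leaf defines a representation $\rho'\co\pi_1(L)\to\mathrm{Homeo}_+(T)$ that is $C^0$-close to the trivial representation coming from $\mathcal{F}$. The crucial translation is that a \emph{global fixed point} of $\rho'$ is precisely a point whose $\mathcal{F}'$-leaf is a compact leaf homeomorphic to $L$; so it suffices to produce such a fixed point. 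Here both hypotheses enter. Because $\pi_1(L)$ is amenable, the image $\rho'(\pi_1(L))$ is an amenable subgroup of $\mathrm{Homeo}_+(S^1)$ and therefore preserves a Borel probability measure $\mu$ on $T$. The associated rotation number is a homomorphism $\mathrm{rot}_\mu\co\pi_1(L)\to\mathbb{R}/\mathbb{Z}$, hence factors through $H_1(L;\mathbb{Z})$; since $H^1(L;\mathbb{R})=0$ this group is finite, so $\mathrm{rot}_\mu$ has finite image. On the other hand $\rho'$ is close to the identity, so on a fixed finite generating set the rotation numbers are close to $0$; shrinking the neighborhood of $\mathcal{F}$ forces them to vanish, giving $\mathrm{rot}_\mu\equiv 0$. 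An action on $S^1$ with an invariant measure and vanishing rotation number has a global fixed point (if $\mu$ has an atomic orbit it is fixed pointwise, and otherwise one semiconjugates to the trivial rotation group and pulls back a singleton fiber), producing the desired compact leaf $L'\cong L$ of $\mathcal{F}'$.

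Finally I would close the loop. As $L'\cong L$, its fundamental group is amenable and $H^1(L';\mathbb{R})=0$, so the first paragraph applies verbatim to $\mathcal{F}'$: the leaf $L'$ has trivial holonomy and a product neighborhood, is globally stable, and $\mathcal{F}'$ is again a fibration over $S^1$ with fiber $L$. Two such fibration foliations that are $C^0$-close determine the same class in $H^1(M;\mathbb{Z})$ and have isotopic fibers, so a leaf-preserving homeomorphism is assembled in the standard way, yielding the topological equivalence of $\mathcal{F}$ and $\mathcal{F}'$.

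I expect the main obstacle to be the second step: making the transverse holonomy $\rho'$ rigorous and genuinely $C^0$-close to the trivial one in the $C^0$ (rather than $C^1$) setting, where a fixed transversal need not remain transverse and the leaf tangent planes are uncontrolled. One must instead build $\rho'$ from the intrinsic local product structure of $\mathcal{F}'$ and compare leaf spaces directly, and in the non-product case keep careful track of how the monodromy $\phi$ twists the $\pi_1(L)$-action on $T$. By contrast, the amenability-plus-$H^1$ mechanism that manufactures the fixed point is robust and is where the hypotheses on $L$ are genuinely used.
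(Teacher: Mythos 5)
Your overall strategy is a genuinely different one from the paper's (a Langevin--Rosenberg-style argument: perturbed holonomy as a group action on the transversal, an invariant probability measure from amenability, rotation numbers killed by $H^1(L;\mathbb{R})=0$, hence a fixed point and a compact leaf, then a second application of the $C^0$ Thurston conjecture), but it has a genuine gap at its first step, and it is exactly the gap the paper's machinery is built to close. In the $C^0$ setting, the holonomy of a nearby foliation $\mathcal{F}'$ along a leafwise loop of $\mathcal{F}$ is a priori only a \emph{germ} of a local homeomorphism of the transversal near the basepoint, with domain of definition degrading as word length grows; to make $\gamma \mapsto \rho'(\gamma)$ a genuine homomorphism $\pi_1(L)\to\mathrm{Homeo}_+(T)$ one needs uniform well-definedness statements (independence of the endpoint on the choice among bounded-length leafwise paths with the same endpoints, compatibility of projections under concatenation), which is precisely the compactness Claim in Step 3 of the paper's proof of Theorem \ref{thm:Haefliger--Bonatti}; you flag this as ``the main obstacle'' but propose no mechanism, and ``compare leaf spaces directly'' is circular since the leaf space of $\mathcal{F}'$ being a $1$-manifold is essentially the conclusion. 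Worse, in the fibration case the circle action you invoke does not exist as stated: pushing a loop of $L$ once around the transversal circle returns its image under the monodromy $\phi_*$, so there is no continuous $\pi_1(L)$-indexed family of loops over all of $T\cong S^1$; at best one gets an action of $\pi_1(M)\cong\pi_1(L)\rtimes_{\phi}\mathbb{Z}$ on the leaf space $\mathbb{R}$ of the infinite cyclic cover, and your invariant-probability-measure/rotation-number argument must be replaced by a Plante-type invariant Radon measure and translation-number argument on the line --- plausible in spirit but a different, unproven argument. The paper sidesteps all of this by never producing a global action: it works with germs of the whole parametrized family at the basepoint, which is why the amenability input enters through Witte--Morris together with left-orderability of the germ group $\utilde{\mathrm{Homeo}}_+^{(S,0)}(\mathbb{R},0)$ (Proposition \ref{prop: left orderable}) rather than through an invariant measure. (Your middle step itself is fine, granted the action: the rotation-number homomorphism factors through the finite group $H_1(L;\mathbb{Z})$, closeness forces it to vanish, and an invariant measure with vanishing rotation numbers has its support fixed pointwise.)

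There is a second gap at the endgame. Even granting a compact leaf $L'\cong L$ of $\mathcal{F}'$ with trivial holonomy, so that $\mathcal{F}'$ is again a product or a fibration over $S^1$, your final claim --- that two $C^0$-close fibrations determine the same class in $H^1(M;\mathbb{Z})$ with isotopic fibers and hence a leaf-preserving homeomorphism is ``assembled in the standard way'' --- is the crux rather than a routine step: outside dimension three, two fibrations of the same $M$ over $S^1$ in the same cohomology class with homeomorphic fibers need not be equivalent, since the monodromies can be homotopic yet non-conjugate. One must genuinely use the closeness, and doing so honestly amounts to constructing a global homeomorphism by projecting paths along a transverse field of disks and proving it is a degree-one covering --- that is, Steps 2--4 of the paper's proof of Theorem \ref{thm:Haefliger--Bonatti} (the injectivity half of the Bonatti--Haefliger correspondence in the $C^{1,0}$ setting), which is the paper's main technical theorem. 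In short, the two places where your proposal is incomplete correspond exactly to the two results the paper proves from scratch, while your fixed-point mechanism, where it works, is an interesting alternative use of amenability to the paper's orderability route.
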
	

Here is an outline of the proof of Theorem \ref{thm:main}. We are interested in the deformation of a foliation $\mathcal{F}$ by a parameter $s$, where $s$ belongs to a pointed topological space $(S,0)$ and the base point $0 \in S$ corresponds to the initial foliation $\mathcal{F}$. For our application, we think of $S$ as the subset $\{ \frac{1}{n} | n \in \mathbb{N}\} \cup \{0\}$ of the real line with the base point $0$. One might expect that a deformation of $\mathcal{F}$ corresponds to a `deformation of the holonomy representation of $\mathcal{F}$'. This is too good to be true, as can be seen by considering a linear foliation of a two-dimensional torus by an irrational slope: In this case, the holonomy representation of the foliation is trivial since all leaves are simply connected, but there are nontrivial linear deformations of the foliation. However, by the foundational work of  Bonatti and Haefliger \cite{bonatti1990deformations}, for $C^r$ foliations where $r \geq 1$, if instead of the (usual) holonomy representation defined on the fundamental group of a leaf we consider the holonomy representation defined on the \emph{\'etale fundamental groupoid} of the foliation, then the germ of a deformation of $\mathcal{F}$ by a parameter $s \in (S, 0)$ corresponds to the germ of a deformation by $s \in (S, 0)$ of the holonomy map defined on the fundamental groupoid. See Section 3 and Theorem \ref{thm:Haefliger--Bonatti-original} for a precise statement. 

A sequence of foliations close to $\mathcal{F}$ defines a deformation of $\mathcal{F}$ by a parameter $s \in S = \{ \frac{1}{n} | n \in \mathbb{N}\} \cup \{0\}$. By Bonatti and Hafliger's work, this deformation of $\mathcal{F}$ gives a deformation by $s \in S$ of the holonomy representation $\mathrm{H}$ of the fundamental groupoid of $\mathcal{F}$. Now since the foliation $\mathcal{F}$ is locally a product, its holonomy representation $\mathrm{H}$ is easy to describe. As a result, a deformation of the holonomy representation of the fundamental groupoid of $\mathcal{F}$ gives rise to a homomorphism 
\[ \phi \colon \pi_1(L) \rightarrow  \utilde{\mathrm{Homeo}}_+^{(S,0)}(\mathbb{R}, 0),\]
where $\utilde{\mathrm{Homeo}}_+^{(S,0)}(\mathbb{R}, 0)$ is a certain subgroup of the group of germs of local homeomorphisms of $\mathbb{R} \times S$ at the point $(0,0) \in \mathbb{R} \times S$. See Notation \ref{notation: group of germs}. In Proposition \ref{prop: left orderable}, we generalise Navas's theorem to show that $\utilde{\mathrm{Homeo}}_+^{(S,0)}(\mathbb{R}, 0)$ is left-orderable. Since $\pi_1(L)$ is amenable, it follows from Witte-Morris's theorem that $\phi$ should be trivial.  By Theorem \ref{thm:Haefliger--Bonatti} (which proves the injectivity part of Bonatti--Haefliger's theorem in the $C^0$ case), the map 
\[ \{ \text{germs of deformations of }\mathcal{F} \}/\sim \hspace{6mm} \longrightarrow \hspace{6mm} \{ \text{germs of deformations of }\mathrm{H} \}/\sim' \] 
is injective, where $\sim$ and $\sim'$ are the relevant equivalence relations on the two sides. Since $\phi$ is trivial, it follows that the deformation of $\mathcal{F}$ should be trivial as well and the theorem follows. 


\subsection*{Acknowledgment}SN was partially supported by a grant from the Simons Foundation (41000919, SN) and NSF CAREER Grant DMS-2239106.	MY is supported by a UKRI Postdoctoral Research Fellowship.


\section{Left-invariant orders on groups}	
In this section we collect some background on left-orderable groups; a good reference is Clay and Rolfsen (\cite{clay2016ordered}). We then prove that a certain group of germs of homeomorphisms is left-orderable, generalizing Navas's theorem. 

A group $G$ is \emph{left-orderable} if there is a total order $\prec$ on the elements of $G$ that is invariant under left multiplication by $G$; i.e. for every $f, g, h \in G$ 
\[ g \prec h \implies fg \prec fh.   \] 
Note that a left-orderable group is torsion-free, since if $e \in G$ is the identity element and $e \neq g \in G$ is such that $g^n=e$ and $g \succ e$, then 
\[ g^n \succ g^{n-1} \succ \cdots \succ g \succ e \]
which is a contradiction. Similarly, $g \prec e$ leads to a contradiction, implying that $G$ is torsion-free. A subgroup of a left-orderable group is again a left-orderable group, which is easily seen by restricting the order. 

\begin{ex}
	An important example of a left-orderable group is the group $\mathrm{Homeo}^+(\mathbb{R})$ of orientation-preserving homeomorphisms of $\mathbb{R}$. To see this, let $a_1, a_2, a_3, \cdots$ be a dense sequence in $\mathbb{R}$. For every two distinct elements $f, g  \in \mathrm{Homeo}^+(\mathbb{R})$, let $n = n(f,g)$ be the smallest number such that $f(a_n) \neq g(a_n)$, and define an ordering $ \prec $ as
	\[ f \prec g \iff f(a_n)<g(a_n). \]
	Since the $\{ a_i\}$ are dense in $\mathbb{R}$, for every $f \neq g$ such $n$ exists. Moreover, since elements of $\mathrm{Homeo}^+(\mathbb{R})$ are strictly increasing, the above ordering is invariant under left multiplication. 
\end{ex}
The following theorem shows that $\mathrm{Homeo}^+(\mathbb{R})$ is a fairly general example of a left-orderable group. See Deroin, Navas, and Rivas \cite{deroin2014groups} and Navas \cite{navas2010dynamics} for further interplay between dynamics and left-orders. 

\begin{thm}[Dynamic realization]
	Let $G$ be a countable group. Then $G$ is left-orderable if and only if there is an injective homomorphism $h \colon G \rightarrow \mathrm{Homeo}^+(\mathbb{R})$; i.e. $G$ acts faithfully on the real line by orientation-preserving homeomorphisms. 
	\label{thm: dynamic realization}
\end{thm}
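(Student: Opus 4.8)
The plan is to prove the two directions separately, with the forward implication being routine and the reverse implication (the ``dynamic realization'' of a left order) carrying the real content.

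For the ``if'' direction, assume an injective homomorphism $h \colon G \to \mathrm{Homeo}^+(\mathbb{R})$. The Example preceding the theorem exhibits a left-invariant order $\prec$ on $\mathrm{Homeo}^+(\mathbb{R})$, and as already remarked a subgroup of a left-orderable group inherits a left order by restriction. Pulling this order back along the isomorphism $G \cong h(G)$ — that is, declaring $g \prec_G g'$ whenever $h(g) \prec h(g')$ — produces a left-invariant total order on $G$, precisely because $h$ is an injective homomorphism. Hence $G$ is left-orderable.

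For the ``only if'' direction, suppose $(G,\prec)$ is countable and left-orderable; I may assume $G$ is nontrivial, since otherwise the claim is vacuous. First I would record that $(G,\prec)$ has neither a largest nor a smallest element: picking any $a \succ e$ gives $e \prec a \prec a^2 \prec \cdots$ and $\cdots \prec a^{-2} \prec a^{-1} \prec e$ by left-invariance. Next I would enumerate $G = \{g_0 = e, g_1, g_2, \ldots\}$ and build an order-embedding $t \colon (G,\prec) \to (\mathbb{R}, <)$ inductively: set $t(g_0) = 0$, and having placed $t(g_0), \ldots, t(g_{n-1})$, put $t(g_n) = 1 + \max_{i<n} t(g_i)$ if $g_n$ exceeds all previous elements in $\prec$, put $t(g_n) = -1 + \min_{i<n} t(g_i)$ if it lies below all of them, and otherwise place $t(g_n)$ at the midpoint of the two already-placed values that bracket it. This $t$ is manifestly order-preserving, and because $(G,\prec)$ has no largest (resp. smallest) element, the first (resp. second) case recurs cofinally often, so $t(G)$ is unbounded above and below.

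Then I would let $G$ act on itself by left translation and transport this through $t$: for each $g$ set $\tau_g \colon t(G) \to t(G)$, $\tau_g(t(h)) = t(gh)$. Left-invariance of $\prec$ makes each $\tau_g$ an order-preserving bijection of $t(G)$, and $\tau_g \circ \tau_{g'} = \tau_{gg'}$ holds on $t(G)$. I would extend each $\tau_g$ to all of $\mathbb{R}$ by its unique continuous monotone extension to the closure $\overline{t(G)}$ together with affine interpolation across each gap (maximal open interval of the complement). The assignment $g \mapsto \tau_g$ is then a homomorphism $h \colon G \to \mathrm{Homeo}^+(\mathbb{R})$, since the extensions are determined by their values on $t(G)$, and it is faithful because $g \neq e$ forces $\tau_g(0) = \tau_g(t(e)) = t(g) \neq 0$, so $\tau_g \neq \mathrm{id}$.

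The step I expect to be the main obstacle is verifying that each extended $\tau_g$ is a \emph{homeomorphism of all of} $\mathbb{R}$ and not merely a monotone embedding: the continuous extension and affine interpolation only automatically yield a map onto the convex hull of $\overline{t(G)}$. This is exactly where the unboundedness of $t(G)$ built into the construction is used — it forces $\overline{t(G)}$ to be cofinal in both directions, so its convex hull is all of $\mathbb{R}$ and the extension is surjective, hence an orientation-preserving homeomorphism. I would take care to confirm that $\tau_g$ carries gaps of $\overline{t(G)}$ bijectively to gaps, so that the affine pieces match across endpoints, and that continuity holds at accumulation points of $t(G)$; both follow from the monotonicity of $t$ and of $\tau_g$.
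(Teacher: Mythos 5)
Your proposal is correct and follows essentially the same dynamic realization construction as the paper's proof: the inductive order-embedding $t \colon G \to \mathbb{R}$ (records placed one unit beyond the current extremes, midpoints otherwise), the left-translation action transported through $t$, and continuous extension to $\overline{t(G)}$ with affine interpolation across gaps. You additionally spell out the routine ``if'' direction (which the paper delegates to the preceding Example) and justify unboundedness via the absence of $\prec$-extremal elements, which is slightly more careful than the paper's argument; note that your $-1+\min$ silently corrects an evident sign typo ($\min+1$) in the paper's proof.
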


\begin{proof}
	We bring the proof given in \cite[Theorem 2.2.19]{navas2011groups} since we need to slightly modify it later for our application. Suppose that $G$ admits a left-invariant order $\prec$. First, define an embedding $t \colon G \rightarrow \mathbb{R}$ as follows: Choose a numbering $g_0, g_1, g_2, \cdots$ for the elements of $G$. Let $t(g_0)=0 \in \mathbb{R}$. Assume that $g_0, g_1, \cdots, g_i$ are defined and define $g_{i+1}$ inductively as follows. If $g_{i+1}$ is larger (in the order $\prec$) than all of the previously defined $g_1, g_2, \cdots, g_i$ then set
	\[ t(g_{i+1}) = \max \{ t(g_0), \cdots, t(g_i) \} +1.\]  
	If $g_{i+1}$ is smaller than all of $g_1, g_2, \cdots, g_i$ then define 
	\[ t(g_{i+1}) = \min \{ t(g_0), \cdots, t(g_i) \} +1.\] 
	If there are $0 \leq m, n \leq i$ such that $g_n \prec g_{i+1} \prec g_{m}$ and no other element of $\{ g_0, g_1, \cdots, g_i \}$ lies strictly between $g_n$ and $g_m$ then define 
	\[ t(g_{i+1}) = \frac{t(g_n)+t(g_m)}{2}. \]
	This defines an embedding $t \colon G \rightarrow \mathbb{R}$. Note that the image of $t$ is unbounded, since if $g \succ e$ then $t(g^n) \rightarrow +\infty$ as $n \rightarrow +\infty$. 
	Then $G$ naturally acts on $t(G)$ by setting $g \cdot (t(g_i)): = t(gg_i)$ for every $g \in G$, and this action continuously extends to the closure $\overline{t(G)}$ of $t(G)$ in $\mathbb{R}$. The complement of the closure of $t(G)$ in $\mathbb{R}$ is an open subset of $\mathbb{R}$, and so is a countable union of open intervals. Extend this action of $G$ on $\overline{t(G)}$ to an action of $G$ on $\mathbb{R}$ such that each $g \in G$ acts by an affine map when restricted to each of the intervals of $\mathbb{R} \setminus \overline{t(G)}$.
\end{proof}

\begin{remark}
	We show that in Theorem \ref{thm: dynamic realization}, one can assume that the following conditions are also satisfied:
	
	\begin{enumerate}
		\item[a)] each element in the image of $h$ fixes the point $0 \in \mathbb{R}$; and
		\item[b)] each element in the image of $h$ has non-trivial germ at $0$. 
	\end{enumerate}
	These two conditions will be used later in the proof of Corollary \ref{thm:nontrivial homomorphism}. 
	
	Let $h \colon G \rightarrow \mathrm{Homeo}^+(\mathbb{R})$ be the constructed action in the proof of Theorem \ref{thm: dynamic realization}. Identify $\mathbb{R}$ with the interval $(-\infty, 0) \subset \mathbb{R}$ to obtain a new action $h' \colon G \rightarrow \mathrm{Homeo}^+(\mathbb{R})$. Then $h'$ has the desired properties. Clearly, every element in the image of $h'$ fixes the point $0$. Moreover, each element in the image of $h'$ has a non-trivial germ at $0$. This is because for every element $g \succ e$ we have $h'(g)^n(t(g_0)) \rightarrow 0$ as $n \rightarrow + \infty$.
	\label{rem: dynamic realization}
\end{remark}

If $X$ is a subset of a group $G$, denote by $S(X)$ the semigroup generated by elements of $X$. If $X = \{ x_1, \cdots, x_n\}$ is a finite set, we write $S(X)$ as $S(x_1, \cdots, x_n)$. For proof of the following criterion for left-orderability see Clay and Rolfsen \cite[Theorem 1.48]{clay2016ordered} or Deroin, Navas, and Rivas \cite[Section 1.1.2]{deroin2014groups}.

\begin{thm}
	A group $G$ is left-orderable if and only if for every finite subset $\{ x_1, \cdots, x_n\}$ of $G$ which does not contain the identity, there exist $\epsilon_i \in \{ -1, 1\}$ such that $S(x_1^{\epsilon_1}, \cdots, x_n^{\epsilon_n})$ does not contain the identity element of $G$.
	\label{thm: criterion for left-orderability}
\end{thm}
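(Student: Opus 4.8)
The strategy is to pass to the language of \emph{positive cones} and to prove the two implications separately, the content lying entirely in the ``if'' direction, which I would establish by a compactness argument on a space of sign assignments. First I would recall the dictionary: a left-invariant order $\prec$ on $G$ is the same data as a \emph{positive cone}, i.e.\ a subset $P \subseteq G$ with $P \cdot P \subseteq P$ and $G = P \sqcup \{e\} \sqcup P^{-1}$, the passage being $P := \{g : e \prec g\}$ in one direction and $g \prec h \iff g^{-1}h \in P$ in the other; left-invariance corresponds to $P$ being a sub-semigroup, and the trichotomy to totality of the order. For the ``only if'' direction this makes the claim immediate: given a positive cone $P$ and a finite set $\{x_1,\dots,x_n\}\subseteq G\setminus\{e\}$, set $\epsilon_i = +1$ if $x_i \in P$ and $\epsilon_i = -1$ if $x_i \in P^{-1}$, so that each $x_i^{\epsilon_i}\in P$; since $P$ is a sub-semigroup not containing $e$, the generated semigroup $S(x_1^{\epsilon_1},\dots,x_n^{\epsilon_n})$ lies in $P$ and hence avoids $e$.

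The substantive direction is the converse, which I would prove by compactness. Let $\Omega = \{+1,-1\}^{G\setminus\{e\}}$ carry the product topology, compact by Tychonoff, and for each finite set $F \subseteq G\setminus\{e\}$ define
\[ K_F = \bigl\{\, \omega \in \Omega : e \notin S\bigl(\{\, x^{\omega(x)} : x \in F \,\}\bigr) \,\bigr\}. \]
Each $K_F$ depends only on the finitely many coordinates $\{\omega(x):x\in F\}$, so it is clopen, and it is nonempty \emph{precisely} by the hypothesis applied to $F$. I would then check that the family $\{K_F\}$ has the finite intersection property: if $F = F_1 \cup \dots \cup F_k$, then enlarging the generating set can only enlarge the generated semigroup, so $K_F \subseteq K_{F_i}$ for each $i$, whence $\emptyset \neq K_F \subseteq \bigcap_i K_{F_i}$. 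By compactness the total intersection is nonempty; I fix a point $\omega^\ast \in \bigcap_F K_F$ and set $P := \{g : \omega^\ast(g) = +1\}$.

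It remains to verify that $P$ is a positive cone, which I would extract by testing $\omega^\ast$ against judiciously chosen finite sets. Taking $F = \{g, g^{-1}\}$ shows that the only admissible signs are $(\omega^\ast(g), \omega^\ast(g^{-1})) = (+1,-1)$ or $(-1,+1)$: the assignments $(+1,+1)$ and $(-1,-1)$ are excluded because they produce $gg^{-1}=e$, while $K_{\{g\}}\neq\emptyset$ forces $g$ to have infinite order so that the remaining two assignments survive; this yields antisymmetry and hence the trichotomy $G = P \sqcup\{e\}\sqcup P^{-1}$. For semigroup closure, suppose $a,b\in P$ but $ab \notin P$; then $\omega^\ast\bigl((ab)^{-1}\bigr) = +1$, so $b^{-1}a^{-1}\in P$, and membership $\omega^\ast \in K_{\{a,\,b,\,b^{-1}a^{-1}\}}$ is contradicted by $a\cdot b\cdot(b^{-1}a^{-1}) = e$. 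Thus $P$ is a sub-semigroup forming a positive cone, and it defines the desired left-invariant order.

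The main obstacle is choosing the correct notion of constraint. A naive pointwise encoding of closure — ``if $g$ and $h$ are positive then $gh$ is positive'' — is awkward, since arbitrary sign choices on elements not yet constrained can violate it, and the finite intersection property becomes delicate to verify. Phrasing the closed sets $K_F$ directly through the condition $e\notin S(\dots)$, which is exactly the form of the hypothesis, is what makes nonemptiness and the finite intersection property immediate; antisymmetry and multiplicative closure then fall out afterward by evaluating $\omega^\ast$ on the small test sets $\{g,g^{-1}\}$ and $\{a,b,b^{-1}a^{-1}\}$.
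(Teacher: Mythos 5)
Your proof is correct, and it is essentially the standard argument: the paper itself does not prove this criterion but defers to Clay--Rolfsen \cite[Theorem 1.48]{clay2016ordered} and Deroin--Navas--Rivas \cite[Section 1.1.2]{deroin2014groups}, where the same compactness-of-sign-assignments proof (positive cone extracted from a point of $\bigcap_F K_F$ in $\{\pm 1\}^{G\setminus\{e\}}$ via Tychonoff and the finite intersection property) is given. The only micro-gap is in the closure step: before concluding $\omega^\ast\bigl((ab)^{-1}\bigr)=+1$ from $ab\notin P$, you should note that $ab\neq e$, which is immediate since $ab=e$ would give $b=a^{-1}\in P$ with $a\in P$, contradicting the antisymmetry you had already established.
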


Intuitively, the above criterion says that if there is no obstruction for defining a left-order on each \emph{finitely generated} subgroup of the group $G$, then $G$ is left-orderable. 

\begin{notation}
	Let $S$ be a topological space and $0 \in S$ be a base point. Let $\utilde{\mathrm{Homeo}}_+^{(S,0)}(\mathbb{R}, 0)$ be the group of germs of local homeomorphisms of $\mathbb{R} \times S$ of the form $(h_s(x), s)$ at the point $(0 \times 0)$ where each $h_s(x)$ is a local orientation-preserving homeomorphism of $\mathbb{R}$. In particular, all elements of $\utilde{\mathrm{Homeo}}_+^{(S,0)}(\mathbb{R}, 0)$ fix the point $(0, 0)$.
	\label{notation: group of germs}
\end{notation}

The following is an analog of Navas's theorem (\cite[Remark 1.1.13]{deroin2014groups} and \cite[Proposition 3]{MR3369358}). The case of $S = \{ 0 \}$ is the statement of Navas's theorem. The proof is also similar. 

\begin{prop}
	Let $(S,0)$ be a pointed metrizable topological space. The group $\utilde{\mathrm{Homeo}}_+^{(S,0)}(\mathbb{R}, 0)$ is left-orderable.
	\label{prop: left orderable}
\end{prop}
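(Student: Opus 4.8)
The plan is to verify the left-orderability criterion of \Cref{thm: criterion for left-orderability}: it suffices to show that for every finite set $\{f_1,\dots,f_n\}$ of non-identity elements of $G:=\utilde{\mathrm{Homeo}}_+^{(S,0)}(\mathbb{R},0)$ there are signs $\epsilon_i\in\{-1,1\}$ for which the semigroup $S(f_1^{\epsilon_1},\dots,f_n^{\epsilon_n})$ avoids the identity germ. I would represent each $f_i$ by a germ $(x,s)\mapsto (h^i_s(x),s)$ at $(0,0)$ and record its behaviour through the displacement $D_i(x,s)=h^i_s(x)-x$, a continuous function vanishing at $(0,0)$ which, because $f_i$ is a non-identity germ, does not vanish identically on any neighbourhood of $(0,0)$. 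The structural observation driving the proof is that the set $P$ of germs whose $x$-displacement is $\ge 0$ on some neighbourhood of $(0,0)$ is a sub-semigroup of $G$: since every element of $G$ fixes the $s$-coordinate and is increasing in $x$, if $h^i_s(x)\ge x$ and $h^j_s(y)\ge y$ near $(0,0)$ then $h^i_s(h^j_s(x))\ge h^j_s(x)\ge x$.

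With this in hand I would first dispose of the case in which each $f_i$ is comparable to the identity, meaning that for each $i$ one of $f_i$ or $f_i^{-1}$ lies in $P$. Choosing $\epsilon_i$ so that $g_i:=f_i^{\epsilon_i}$ has $H^i_s(x)\ge x$ on a common neighbourhood $V$ of $(0,0)$, fix a non-identity positive word $w=g_{i_1}\cdots g_{i_m}$. For $(x,s)$ close enough to $(0,0)$ that the finite forward orbit stays in $V$, monotonicity in $x$ yields the chain $x\le H^{i_m}_s(x)\le H^{i_{m-1}}_s(H^{i_m}_s(x))\le\cdots\le W_s(x)$. If $w$ were the identity germ, then $W_s(x)=x$ near $(0,0)$, forcing every inequality in the chain to be an equality and in particular $H^{i_m}_s(x)=x$ on a neighbourhood of $(0,0)$, contradicting that $g_{i_m}$ is a non-identity germ. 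Hence the semigroup avoids the identity. The torsion-freeness recorded at the start of this section handles the degenerate configurations, such as when the $f_i$ generate a cyclic group.

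I expect the genuine difficulty to be the \emph{oscillating} germs, exactly as in Navas's theorem: if $D_i$ changes sign on every neighbourhood of $(0,0)$ then $f_i$ is incomparable to the identity, neither sign places $g_i$ in $P$, and the weaker fact that each $g_i$ displaces positively merely along some sequence $(x_k,s_k)\to(0,0)$ is not enough to control the forward orbit of a long word. The parametrized setting compounds this, since the location and sign of the oscillation may vary with $s$. I would resolve it following Navas: the criterion only involves the countable subgroup $\langle f_1,\dots,f_n\rangle$, and one selects the signs together with a nested family of regions shrinking to $(0,0)$ along which the chosen generators displace coherently, after which the monotone argument of the previous paragraph applies. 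Here the metrizability of $(S,0)$ is used to realize these choices through a sequence converging to the base point, and the passive preservation of the parameter $s$ lets the fibrewise reasoning of Navas's original proof (the case $S=\{0\}$, paralleling \Cref{thm: dynamic realization}) carry over with the extra coordinate simply carried along. This verifies the criterion and shows that $G$ is left-orderable.
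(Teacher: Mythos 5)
Your setup is right---the paper also verifies the criterion of Theorem~\ref{thm: criterion for left-orderability}---and your first paragraph correctly disposes of germs comparable to the identity. But the case you yourself flag as ``the genuine difficulty,'' the oscillating germs, is exactly where the proposal stops being a proof. Your stated resolution is to choose ``a nested family of regions shrinking to $(0,0)$ along which the chosen generators displace coherently, after which the monotone argument of the previous paragraph applies,'' and both halves of that sentence fail. An oscillating germ by definition displaces with both signs on every neighborhood of $(0,0)$, so there are no \emph{regions} of coherent displacement; all one can extract (and here is where metrizability enters, giving the sequential characterization of a non-identity germ) is a \emph{sequence} of points $(x_r,s_r)\to(0,0)$ at which a fixed generator displaces with a consistent sign after passing to a subsequence. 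And once coherence holds only along a sequence, the monotone argument of your first paragraph does not apply: that argument needed $H^i_s(x)\ge x$ on a whole neighborhood $V$ containing the forward orbit, whereas the intermediate points $g_{i_j}\cdots g_{i_m}(x_r,s_r)$ of a word are not sequence points and you have no displacement information there. The correct mechanism (the paper's, following Navas) is different: arrange by subsequence extraction that \emph{every} generator satisfies either $\pi_1\circ f_i^{\epsilon_i}(x_r,s_r)>x_r$ at every sequence point or $\pi_1\circ f_i^{\epsilon_i}(x_r,s_r)=x_r$ at every sequence point, and then propagate through a word using strict monotonicity of the fiber maps: since every element preserves the $s$-coordinate, each intermediate point has the form $(y,s_r)$ with $y\ge x_r$, and strict monotonicity of $h_{s_r}$ gives $\pi_1\circ g(y,s_r)\ge \pi_1\circ g(x_r,s_r)\ge x_r$, with strictness preserved once one moving letter occurs. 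This pointwise-at-the-sequence argument anchored by fiberwise monotonicity is the heart of the proof, and it is absent from your sketch.

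There is also combinatorial bookkeeping you omit entirely: some generators may fix the first coordinate at all but finitely many points of the chosen sequence (for these neither sign is determined by that sequence), so one must start afresh with a new sequence for the remaining generators, iterate finitely many times, and then interleave the resulting $N$ sequences into a single diagonal sequence $(p_j,q_j)\to(0,0)$; the verification that a positive word $w$ is not the identity germ then splits into cases according to which block of generators appears in $w$, and $w$ is seen to move infinitely many of the $(p_j,q_j)$, contradicting $w=e$ as a germ. Finally, your appeal to ``torsion-freeness recorded at the start of this section'' is circular as written---that remark \emph{derives} torsion-freeness from left-orderability, which is what is being proved---and no degenerate case needs it, since the sequence argument handles powers of a single generator uniformly. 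In sum: the skeleton matches the paper, but the core of the argument---sign selection along sequences, strict fiberwise monotonicity at sequence points, and the multi-sequence diagonal---is missing or misdescribed, so as it stands the proposal only proves the comparable case.
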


\begin{proof}
	Let $G = \utilde{\mathrm{Homeo}}_+^{(S,0)}(\mathbb{R}, 0)$.  By Theorem \ref{thm: criterion for left-orderability}, it is enough to show that for every non-identity elements $f_1, \cdots, f_k$ in $G$, there are $\epsilon_1, \cdots, \epsilon_k$ in $\{-1 , +1\}$ such that the semigroup generated by $f_i^{\epsilon_i}$ does not contain the identity element. 
	
	Note that the identity element, denoted by $e$, is the germ of the identity homeomorphism of $\mathbb{R} \times S$ at $(0 \times 0)$. Hence we have the following criterion for detecting non-identity elements: $f \in G$ is not equal to the identity element $e$ if and only if there exists a sequence $(p_j, q_j) \in \mathbb{R} \times S$ of points converging to $(0, 0)$ as $j \rightarrow \infty$ such that $f(p_j, q_j) \neq (p_j, q_j)$. Here we use the fact that $S$ is metrizable to deduce that convergence and sequential convergence are equivalent.
	
	For each $1 \leq i \leq k$, pick a representative homeomorphism for $f_i$ and denote it by $f_i$ again by abuse of notation. We first define $\epsilon_1, \cdots, \epsilon_k$, starting with $\epsilon_1$. Since $f_1 \neq e$, there is a sequence $(x_r, s_r) \rightarrow (0, 0)$ such that for every $r$ 
	\[ f_1(x_r, s_r) \neq (x_r, s_r). \]
	Let $\pi_1 \colon \mathbb{R} \times S \rightarrow \mathbb{R}$ be the projection map onto the first factor. After passing to a subsequence of $(x_r, s_r)$ we may assume that either 
	\begin{enumerate}
		\item 	for every $r$ we have $ \pi_1 \circ f_1 (x_r, s_r) > x_r$; or
		
		\item	for every $r$ we have $ \pi_1 \circ f_1 (x_r, s_r) < x_r$.
	\end{enumerate}

	In case (1) define $\epsilon_1=1$, and in case (2) set $\epsilon_1=-1$.
	
	Now consider $f_2$. At least one of the following holds:
	\begin{enumerate}
		\item[i)] for infinitely many $r$ we have $\pi_1 \circ f_2 (x_r, s_r) > x_r$; or 
		\item[ii)] for infinitely many $r$ we have $\pi_1 \circ f_2 (x_r, s_r) < x_r$; or 
		\item[iii)] for all but finitely many $r$ we have $\pi_1 \circ f_2 (x_r, s_r) = x_r$.
	\end{enumerate}
	In case i) define $\epsilon_2 =1 $, in case ii) set $\epsilon_2 = -1$, and in case iii) we declare $\epsilon_2$ undefined for the moment. In cases i) and ii), after passing to a subsequence of $(x_r, s_r)$ we can assume that $\pi_1 \circ f_2^{\epsilon_2} (x_r, s_r) > x_r$ for all $r$. Repeat this procedure for $f_3, \cdots, f_k$ to define a subset of $\epsilon_1, \cdots, \epsilon_k$. After reordering, assume that $\epsilon_1, \cdots, \epsilon_\ell$ are defined for some $1 \leq \ell \leq k$. Hence for every $1 \leq i \leq \ell$ and every $r$ we have $\pi_1 \circ f_i^{\epsilon_i} (x_r, s_r) > x_r$, and for every $i> \ell$ and every $r$ we have $\pi_1 \circ f_i (x_r, s_r) = x_r$. 
	
	Now start with $f_{\ell +1} \neq e$ and choose a new sequence $(x'_r, s'_r) $ converging to $(0, 0)$ as $r \rightarrow \infty$ such that $f_{\ell +1}(x'_r, s'_r) \neq (x'_r, s'_r)$ for every $r$. We then repeat the previous procedure using the sign of $f_{\ell +1}(x'_r, s'_r) - (x'_r, s'_r)$ to define a non-empty subset of $f_{\ell +1} , \cdots, f_k$. Repeating this procedure we define all of $\epsilon_1, \cdots, \epsilon_k$ using a finite number, say $N$, of sequences $\{(x_r, s_r)\}_{r=1}^{\infty}, \{(x'_r, s'_r)\}_{r=1}^{\infty}, \cdots$ with each sequence converging to $(0,0)$.
	Put the sequences $\{(x_r, s_r)\}_{r=1}^{\infty}, \{(x'_r, s'_r)\}_{r=1}^{\infty}, \cdots$ in successive rows of a table with $N$ rows and infinitely many columns, and define the sequence $(p_j,q_j)$ by following successive finite diagonals of this table. 
	
	Let $w \in S(f_1^{\epsilon_1}, \cdots, f_k^{\epsilon_k})$ be a non-empty word. We show that there are infinitely many $j$ such that $w(p_j, q_j) \neq (p_j,q_j)$. Consider two cases:
	\begin{enumerate}
		\item[a)] At least one of $f_1, \cdots, f_{\ell}$ appears in $w$. Recall that $\ell$ was the index such that $f_1, \cdots, f_\ell$ were defined using the first sequence $(x_r, s_r)$. By construction, for each $1 \leq m \leq \ell $ we have $\pi_1 \circ f_m^{\epsilon_m} (x_r, s_r) > x_r$ and for every $m> \ell$ we have $\pi_1 \circ f_m^{\epsilon_m} (x_r, s_r) = x_r$. Therefore we have  $\pi_1 \circ w(x_r, s_r) > x_r$ for every $r$. Since $(x_r, s_r)$ is an infinite subsequence of $(p_j, q_j)$ we are done. 
		
		\item[b)] None of $f_1, \cdots, f_{\ell}$ appear in $w$. In this case, we have a word in $f_{\ell+1}, \cdots, f_k$ and we can use the next sequence $(x'_r, s'_r)$ to argue similarly. 
	\end{enumerate}
\end{proof}

\begin{cor}
	Let $G$ be a countable group. The following statements are equivalent: 
	\begin{enumerate}
		\item $G$ has a non-trivial homomorphism into $\mathrm{Homeo}_+(\mathbb{R})$;
		\item $G$ has a non-trivial homomorphism into $\utilde{\mathrm{Homeo}}_+(\mathbb{R}, 0)$;
		\item $G$ has a non-trivial homomorphism into $\utilde{\mathrm{Homeo}}_+^{(S,0)}(\mathbb{R}, 0)$, where $S = \{ 0\} \cup \{\frac{1}{n} | n \in \mathbb{N} \} \subset \mathbb{R}$;
		\item $G$ has a non-trivial homomorphism into $\utilde{\mathrm{Homeo}}_+^{(S,0)}(\mathbb{R}, 0)$ for some pointed metrizable topological space $(S,0)$.
	\end{enumerate} 
	\label{thm:nontrivial homomorphism}
\end{cor}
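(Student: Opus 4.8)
The plan is to reduce everything to the single observation that statement $(1)$ holds precisely when $G$ admits a nontrivial countable left-orderable quotient, together with the fact that each germ group appearing in $(2)$--$(4)$ is itself left-orderable. First I would record two cost-free reductions. Since $\utilde{\mathrm{Homeo}}_+(\mathbb{R},0)$ is, by Notation \ref{notation: group of germs}, exactly $\utilde{\mathrm{Homeo}}_+^{(S,0)}(\mathbb{R},0)$ for the one-point space $S=\{0\}$, and since $\{0\}\cup\{1/n : n \in \mathbb{N}\}$ is a pointed metrizable space, both $(2)$ and $(3)$ are instances of the existential statement $(4)$; hence $(2)\Rightarrow(4)$ and $(3)\Rightarrow(4)$ need no argument. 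It therefore suffices to prove $(4)\Rightarrow(1)$ and to prove that $(1)$ yields a nontrivial homomorphism into $\utilde{\mathrm{Homeo}}_+^{(S,0)}(\mathbb{R},0)$ for \emph{every} pointed metrizable $(S,0)$; the latter delivers $(1)\Rightarrow(2)$ and $(1)\Rightarrow(3)$ at once and closes the cycle.

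For $(4)\Rightarrow(1)$, given a nontrivial homomorphism $\rho\colon G \to \utilde{\mathrm{Homeo}}_+^{(S,0)}(\mathbb{R},0)$, its image $Q = \rho(G)$ is countable (as $G$ is countable), nontrivial, and left-orderable, being a subgroup of the left-orderable group of Proposition \ref{prop: left orderable}. Applying the dynamic realization of Theorem \ref{thm: dynamic realization} to $Q$ produces an injective homomorphism $Q \hookrightarrow \mathrm{Homeo}_+(\mathbb{R})$; precomposing with the surjection $G \twoheadrightarrow Q$ gives the required nontrivial homomorphism into $\mathrm{Homeo}_+(\mathbb{R})$.

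For the reverse direction I would start from a nontrivial $\rho\colon G \to \mathrm{Homeo}_+(\mathbb{R})$ and again set $Q = \rho(G)$, a nontrivial countable left-orderable group. The key is to upgrade the realization so that germs at the fixed point detect nontriviality: by Remark \ref{rem: dynamic realization} there is an action $h'\colon Q \to \mathrm{Homeo}_+(\mathbb{R})$ in which every element fixes $0$ and every nonidentity element has nontrivial germ at $0$. Passing to germs therefore yields an \emph{injective} homomorphism $\bar h' \colon Q \to \utilde{\mathrm{Homeo}}_+(\mathbb{R},0)$. Finally, for an arbitrary pointed metrizable $(S,0)$ I would embed $\utilde{\mathrm{Homeo}}_+(\mathbb{R},0)$ into $\utilde{\mathrm{Homeo}}_+^{(S,0)}(\mathbb{R},0)$ by the constant family $\iota$, sending the germ of $h$ to the germ of $(x,s)\mapsto (h(x),s)$; this is a well-defined homomorphism and is injective, since $(x,s)\mapsto(h(x),s)$ agrees with the identity near $(0,0)$ exactly when $h$ has trivial germ at $0$. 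The composite $\iota \circ \bar h' \circ \rho\colon G \to \utilde{\mathrm{Homeo}}_+^{(S,0)}(\mathbb{R},0)$ is then nontrivial.

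I expect the only genuinely delicate point to be the need for nontrivial germs: a priori a nontrivial homomorphism into $\mathrm{Homeo}_+(\mathbb{R})$ could consist of homeomorphisms that are the identity near $0$, in which case passing to germs at $0$ would kill the map. This is precisely the obstruction that Remark \ref{rem: dynamic realization} is designed to remove, and invoking it is the crux of $(1)\Rightarrow(2)$; verifying that $\iota$ is a well-defined injective homomorphism of germ groups, and that subgroups of the groups in Proposition \ref{prop: left orderable} are countable and left-orderable, is then routine.
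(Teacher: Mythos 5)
Your proof is correct and takes essentially the same route as the paper: the nontrivial implications rest on exactly the same three ingredients --- Proposition \ref{prop: left orderable} for left-orderability of $\utilde{\mathrm{Homeo}}_+^{(S,0)}(\mathbb{R},0)$, dynamic realization (Theorem \ref{thm: dynamic realization}) applied to the countable image, and Remark \ref{rem: dynamic realization} to arrange a fixed point with nontrivial germs. The only difference is organizational: you close the cycle through $(4)$ and spell out the constant-family embedding $\iota\colon \utilde{\mathrm{Homeo}}_+(\mathbb{R},0)\hookrightarrow \utilde{\mathrm{Homeo}}_+^{(S,0)}(\mathbb{R},0)$, which the paper leaves implicit in declaring $(2)\Rightarrow(3)\Rightarrow(4)$ immediate.
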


\begin{proof}
	The implications $(2) \implies (3) \implies (4)$ are immediate. By Navas' theorem, we have $(2) \implies (1)$. To see this, let 
	\[ \phi \colon G \rightarrow \utilde{\mathrm{Homeo}}_+(\mathbb{R}, 0) \] 
	be a homomorphism with a non-trivial image. Then the image $\phi(G)$ is countable and left orderable. Therefore, $\phi(G)$ is a subgroup of $\mathrm{Homeo}_+(\mathbb{R})$ and so there is an injective homomorphism $i \colon \phi(G) \hookrightarrow \mathrm{Homeo}_+(\mathbb{R})$. The composition $i \circ \phi$ then gives a non-trivial homomorphism of $G$ into $\mathrm{Homeo}_+(\mathbb{R})$. 
	
	The same argument shows that $(4) \implies (1)$, where we use Proposition \ref{prop: left orderable} instead of Navas' theorem.
	
	The implication $(1) \implies (2)$ is also well-known: Let 
	\[ \phi \colon G \rightarrow \mathrm{Homeo}_+(\mathbb{R}) \]
	be a homomorphism with a non-trivial image. The image $\phi(G)$ is countable and left orderable. Therefore, by the dynamic realization construction (Theorem \ref{thm: dynamic realization}), there is an injective homomorphism 
	\[ \psi \colon \phi(G) \rightarrow \mathrm{Homeo}_+(\mathbb{R}) \]
	such that 
	\begin{enumerate}
		\item[a)] each element in the image of $\psi$ fixes the point $0 \in \mathbb{R}$; and
		\item[b)] each element in the image of $\psi$ has non-trivial germ at $0$. 
	\end{enumerate}
	See Remark \ref{rem: dynamic realization}. It follows that $\psi$ induces a non-trivial homomorphism $\overline{\psi} \colon \phi(G) \rightarrow \utilde{\mathrm{Homeo}}_+(\mathbb{R}, 0)$. Then $\overline{\psi}\circ \phi$ is a non-trivial homomorphism from $G$ into $\utilde{\mathrm{Homeo}}_+(\mathbb{R}, 0)$. 
	
	This completes the equivalence of items (1)--(4).
\end{proof}

\section{Deformations of foliations and their holonomies}

We begin by defining the notion of an \'{e}tale groupoid following the exposition of Haefliger \cite{haefliger2001groupoids}. For further examples see Bridson and Haefliger \cite[Chapter III.$\mathcal{G}$]{bridson2013metric}.

\subsection{\'{E}tale groupoids}

A \emph{groupoid} $(\mathcal{G}, T)$ is a small category with a set of objects $T$ and morphisms $\mathcal{G}$ such that all elements of $\mathcal{G}$ are invertible. There are two projections: the \emph{source projection} $s \colon \mathcal{G} \rightarrow T$ and the \emph{target projection} $t \colon \mathcal{G} \rightarrow T$. The composition $gg'$ of two elements $g$ and $g'$ of $\mathcal{G}$ is defined if $s(g)= t(g')$. 

A \emph{topological groupoid} $(\mathcal{G}, T)$ is a groupoid such that $\mathcal{G}$ and $T$ are topological spaces, with the following properties: 
\begin{itemize}
	\item the composition and taking inverse are continuous; and
	\item the inclusion $x \rightarrow 1_{\{x\}}$ is a homeomorphism $T \rightarrow \mathcal{G}$ onto its image, where $1_{x}$ is the identity morphism at $x$.
\end{itemize}

An \emph{\'{e}tale groupoid} is a topological groupoid $(\mathcal{G}, T)$ such that the source and target projections are \'{e}tale maps; i.e. are locally homeomorphisms. 

Given an \'{e}tale groupoid $(\mathcal{G}, T)$ and an open cover $\mathcal{U}=\{ U_i\}_{i \in I}$ of $T$, the \emph{localization of $(\mathcal{G}, T)$ over $\mathcal{U}$} is an \'{e}tale groupoid $(\mathcal{G}_{\mathcal{U}}, T_{\mathcal{U}})$ defined as follows: 
\begin{itemize}
	\item here $T_{\mathcal{U}}$ is the disjoint union of open sets $U_i$;
	\item the elements of $\mathcal{G}_{\mathcal{U}}$ are the triples $(i,g,j)$ with $s(g) \in U_j$ and $t(g) \in U_i$;
	\item the source and target projection maps send $(i, g, j)$ to $(j,s(g))$ and $(i, t(g))$ respectively; and
	\item the composition $(i,g,j)(j,h,k)$ is defined as $(i, gh, k)$.
\end{itemize}

  Let $\Gamma$ and $\Gamma'$ be two \'{e}tale groupoids whose spaces of objects (units as the space of morphisms) are $T$ and $T'$ respectively.  A \emph{homomorphism} from $\Gamma$ to $\Gamma'$ is a continuous functor that in particular induces a local homeomorphism between morphism spaces. But the notion of a homomorphism is restrictive since we want to work with \'{e}tale groupoids up to equivalence. 
  
  \begin{definition}[Equivalence of \'{e}tale groupoids]
  	Two \'{e}tale groupoids $(\mathcal{G}, T)$ and $(\mathcal{G}',T')$ are \emph{equivalent} if there is an open cover $\mathcal{U}$ of $T$ and an open cover $\mathcal{U}'$ of $T'$ such that the localizations $(\mathcal{G}_\mathcal{U}, T_\mathcal{U})$ and $(\mathcal{G}'_{\mathcal{U}'}, T'_{\mathcal{U}'})$ are isomorphic.
  	\label{def:equivalent}
  \end{definition}
  
  In particular, as we shall see, we want to consider the fundamental groupoid of foliations whose definition depends on the choice of a complete transversal but different choices give equivalent groupoids. So we consider the following notion of \emph{morphisms} between \'{e}tale groupoids. 
 \begin{definition} [Morphism between \'{e}tale groupoids]  Let $\Gamma$ and $\Gamma'$ be two \'{e}tale groupoids whose space of units  are $T$ and $T'$ respectively. Let $\mathcal{U}$ and $\mathcal{V}$ be two open covers of $T$, and let $\phi_{\mathcal{U}}\colon \Gamma_{\mathcal{U}}\to \Gamma'$ and $\phi_{\mathcal{V}}\colon \Gamma_{\mathcal{V}}\to \Gamma'$ be two homomorphisms. Denote by $\mathcal{U} \coprod \mathcal{V}$ the open cover of $T$ that is the disjoint union of the open covers $\mathcal{U}$ and $\mathcal{V}$. We say $\phi_{\mathcal{U}}$ and $\phi_{\mathcal{V}}$ represent equivalent \emph{morphism} $\phi\colon \Gamma\to \Gamma'$ if there exists a homomorphism $\phi_{\mathcal{U}\coprod \mathcal{V}}\colon \Gamma_{\mathcal{U}\coprod \mathcal{V}}\to \Gamma'$ extending $\phi_{\mathcal{U}}$ and $\phi_{\mathcal{V}}$.
 \[
 \begin{tikzcd}[column sep=tiny]
& \Gamma_{\mathcal{U}} \ar[dr, "\phi_{\mathcal{U}}"] \arrow[dl, hook]
&[1cm]
\\
   \Gamma_{\mathcal{U}\coprod \mathcal{V}} \arrow[rr, dashrightarrow, "\phi_{\mathcal{U}\coprod \mathcal{V}}"]
    &
      &\Gamma' 
 \\
&\Gamma_{\mathcal{V}} \ar[ur, "\phi_{\mathcal{V}}"']\arrow[ul, hook]
&
\end{tikzcd}
\]  

\label{def: equivalent morphisms}
 \end{definition}

\subsection{Fundamental groupoid of a foliation}

 Bonatti and Haefliger (\cite{bonatti1990deformations}) developed a theory for studying germs of deformations of a foliation and germs of deformations of its holonomy. See \cite{Bonatti1993} for a comprehensive exposition and various applications. Let $\mathcal{F}$ be a $k$-dimensional $C^{1,0}$ foliation of a compact $n$-dimensional manifold $M$ and $T\mathcal{F}$ be the tangent bundle to $\mathcal{F}$. A \emph{transversal to the foliation $\mathcal{F}$} is a (possibly disconnected) manifold $T$ of dimension $n-k$ and an immersion $\tau \colon T \rightarrow M$ such that at each point $x \in T$ we have 
 \[ T_{\tau(x)}(M) = \tau_*(T_x(T)) \oplus T_{\tau(x)}(\mathcal{F}). \]
 By abuse of notation, we refer to the transversal by $T$. A transversal $T$ is \emph{complete} if it intersects all leaves of $\mathcal{F}$.

\begin{definition}[Fundamental groupoid]
	Let $\mathcal{F}$ be a foliation of a manifold $M$, and $\tau \colon T \rightarrow M$ be a complete transversal. The \emph{fundamental groupoid of $\mathcal{F}$ for the transversal $T$}, denoted by $\Pi_{\mathcal{F}}(T)$, is an \'{e}tale groupoid defined as follows: 
	\begin{itemize}
		\item As a groupoid, its objects are the points in $T$. A morphism from a point $p \in T$ to a point $q \in T$ is the homotopy class of a path $\gamma$ in a leaf of $\mathcal{F}$ starting at $\tau(p)$ and ending at $\tau(q)$, where all the paths during the homotopy lie in the same leaf of $\mathcal{F}$.
		
		\item An open basis for the topology on the set of morphisms is obtained as follows: Let $h$ be a homeomorphism from an open subset $U$ of $T$ to an open subset $V$ of $T$ such that there exists a continuous map $C \colon U \times [0,1] \rightarrow M$ such that, for each $u \in U$, $C_u \colon t \rightarrow C(u,t)$ is a path contained in a leaf of $\mathcal{F}$, starting at $\tau(u)$ and terminating at $\tau(h(u))$. Then the collection of homotopy classes in the leaves of $\mathcal{F}$ of the paths $C_u$, $u \in U$ is such an open set.
	\end{itemize}  
\end{definition}

Note the fundamental groupoid of a foliation $\mathcal{F}$ depends on the choice of a complete transversal $T$. However for two complete transversals $T$ and $T'$ the fundamental groupoids $\Pi_\mathcal{F}(T)$ and $\Pi_\mathcal{F}(T')$ are equivalent in the sense of Definition \ref{def:equivalent}.

There is a natural map that associates to each path $\gamma$ tangent to $\mathcal{F}$ its holonomy germ, and induces a homomorphism from $\Pi_\mathcal{F}(T)$ to the groupoid $\utilde{\mathrm{Homeo}}(T)$ of germs of local homeomorphisms of $T$ 
\begin{eqnarray*}
	\mathrm{H} \colon \Pi_\mathcal{F}(T) \rightarrow \utilde{\mathrm{Homeo}}(T).
\end{eqnarray*} 

The {\' e}tale groupoid $ \utilde{\mathrm{Homeo}}(T)$ has $T$ as its space of objects, and morphisms between two points $x$ and $y$ in $T$ are given by the set of germs of homeomorphisms sending $x$ to $y$. The union of these sets as $x$ and $y$ vary is the morphism space that has the so-called sheaf topology. An open neighborhood of a germ $f$ sending $x$ to $y$ in the morphism space is described as follows. Let $F$ be a local homeomorphism of $T$ from an open set $U$ containing $x$ to an open set $V$ containing $y$ such that its germ at $x$ is $f$. The germs of $F$ at points in $U$ give an open neighborhood of $f$ in the morphism space of $ \utilde{\mathrm{Homeo}}(T)$.

\subsection{Epstein topology on the space of foliations}\label{Epstein}

We want to define the holonomy map for deformations of foliations as an {\' e}tale map out of the fundamental groupoid. To do so, we shall first recall the Epstein topology on the space of foliations.

 For a codimension-$m$ foliation $\mathcal{F}$ of an $n$-dimensional manifold $M$, we choose a {\it neighborhood scheme} $S=(I, \{\phi_i\}, \{K_i\})$ where for each $i$ in the index set $I$, the map $\phi_i\colon U_i\to \mathbb{R}^n$ is a $C^r$-diffeomorphism from an open subset $U_i$ of $M$ to an open set in $\mathbb{R}^n$, such that the image of each leaf in $U_i$ is parallel to $\mathbb{R}^{n-m}\times\{\text{point}\}$, and $K_i\subset U_i$ gives a locally finite family of compact sets in $M$ such that $\phi_i(K_i)$ is a closed $n$-cube with sides parallel to the axes of $\mathbb{R}^n$, and the interiors $\{\text{Int}(K_i)\}$ covers $M$. Each pair $(\phi\colon U\to \mathbb{R}^n, K)$ with the same properties is called a {\it distinguished chart} for $\mathcal{F}$. Given a neighborhood scheme $S$, an index $i\in I$, and a positive real number $\delta$, we define the neighborhood $N(i,\delta)$ of $\mathcal{F}$ in $C^r$-topology to be the set of those foliations $\mathcal{F}'$ for which there is a distinguished chart $(\phi', K)$ for $\mathcal{F}'$ such that $K_i\subset K$ and $|\phi'\circ\phi_i^{-1}-\text{Id}|_r\leq \delta$ on $\phi_i(K_i)$ where $|\cdot|_r$ is the $C^r$-norm on functions on $\mathbb{R}^n$. Then a basis in Epstein's $C^r$ topology on the space of foliations is given by finite intersections of such neighborhoods.

Let $\mathrm{Fol}_m^r(M)$ be the space of codimension-$m$ $C^r$ foliations on $M$ endowed with Epstein $C^r$ topology. Epstein (\cite{MR0501006}) proved that this topology on the space of foliations satisfies the following two axioms: The first axiom states that the group of $C^r$ homeomorphisms of $M$ acts continuously on $\mathrm{Fol}_m^r(M)$. Intuitively, the second axiom states that every foliation $\mathcal{F}'$ sufficiently close to $\mathcal{F}$ has holonomy defined and sufficiently close to that of $\mathcal{F}$. More precisely, let $D(r)$ be the open ball of radius $r$ in $\mathbb{R}^m$ centered at the origin. Let $h \colon I \times D(1) \rightarrow M$ be a $C^r$ map such that $h|\{t\} \times D(1)$ is an embedding transverse to $\mathcal{F}$ for each $t \in I$ such that for each $x \in D(1)$, $h(I \times \{x\})$ lies in a single leaf of $\mathcal{F}$. We require that if $\mathcal{F}'$ is sufficiently close to $\mathcal{F}$, there is a $C^r$ map $k \colon I \times D(\frac{1}{2}) \rightarrow M$ such that 

\begin{enumerate}
	\item[i)] $k| 0 \times D(\frac{1}{2}) = h | 0 \times D(\frac{1}{2})$;
	\item[ii)] for each $x \in D(\frac{1}{2})$, $k|I \times \{x\}$ lies on a single leaf of $\mathcal{F}'$;
	\item[iii)] $k|t \times D(\frac{1}{2}) \subset h|t \times D(\frac{3}{4})$, and $k|t \times D(\frac{1}{2})$ is an embedding for each $t$;
	\item[iv)] for each $t$, $\mathcal{F}'$ is transverse to $h|t \times D(\frac{3}{4})$;
	\item[v)] $k$ is $C^r$-near to $h| I \times D(\frac{1}{2})$. 
\end{enumerate}

We only need the second axiom in this article. Schweitzer (\cite[Lemma 1.1 and Proposition 4.1]{MR935527}) gave a simplified proof of a version of the second axiom for the $C^1$ case; it is straightforward to see that his proof works for the $C^0$ case as well.

\subsection{Deformation of a foliation}
Here we recollect part of the main theorem of Bonatti- Haefliger  (\cite{bonatti1990deformations}) for the deformations of $C^r$-foliations for $r>0$ that also holds for $C^0$-case. 
Let $(S, 0)$ be a locally compact topological space with a base point $0 \in S$. We think of $S$ as the parameter space for deforming either the foliation or its holonomy; here the initial foliation $\mathcal{F}$ corresponds to the base point $0 \in S$. Two useful examples to keep in mind are those of $S$ being $\{ 0 \} \cup \{ \frac{1}{n} | n \in \mathbb{N}\} \subset [0,1]$ with the subspace topology and the base point $0$, or $S$ being the interval $(-1,1)$ with the base point $ 0$. 

We want to consider the {\it germ of a foliation $\mathcal{F}^S$ of $M\times S$ around $M\times \{0\}$}. We shall first recall some definitions to make sense of foliations on $M\times S$ when $S$ is only a topological space.  Let $\mathrm{Homeo}^{S}(T)$ be the pseudogroup of local homeomorphisms of $T \times S$ of the form $(h_s(x), s)$ where $h_s$ is a local homeomorphisms of $T$ continuously varying with $s$. Denote by $\utilde{\mathrm{Homeo}}^{(S,0)}(T)$ (respectively $\utilde{\mathrm{Homeo}}^{S}(T)$) the groupoid of germs of elements of $\mathrm{Homeo}^{S}(T)$ at points of $T \times \{ 0\}$ (respectively $T \times S$).

There is a natural projection 
\[ \pi \colon \utilde{\mathrm{Homeo}}^{(S, 0)}(T) \rightarrow \utilde{\mathrm{Homeo}}(T). \]

A \emph{foliation $\mathcal{F}^S$ parametrized by $S$} on an open set $U$ in $M\times S$ is given by the following cocycle data:
\begin{itemize}
\item Let $\{U_i\}_{i\in I}$ be an open cover of $U$ in $M\times S$. And let $U_i^s$ be the intersection $U_i\cap (M\times\{s\})$. 
\item For each $i$, we have a map $f_i$ from $U_i$ to an open set in $\mathbb{R}^n\times S$, of the form $f_i(x,s)=(f_i^s(x), s)$ where $f_i^s$ is a submersion from $U_i^s$ into $\mathbb{R}^n$ which varies continuously with $s$. 
\item For each $i$ and $j$, we have a continuous map $g_{ij}: U_i\cap U_j\to  \utilde{\mathrm{Homeo}}^{S}(\mathbb{R}^n)$ such that for all $(x,s)\in U_i\cap U_j$, the maps $g_{ij}(x,s)\circ f_j$  and $f_i$ have the same germ at $(x,s)$. 
\end{itemize}

Two such cocycles $(\mathcal{U}, f_i, g_{ij}), (\mathcal{V}, h_i, k_{ij})$ define the same foliation $\mathcal{F}^S$ on $U$ if one can simultaneously extend these two cocycles to a cocycle for the open covering $\mathcal{U}\coprod \mathcal{V}$ of $U$. Note that the foliation $\mathcal{F}^S$ defined in this way on $U$ gives a foliation $\mathcal{F}^s$ on $U^s=U\cap (M\times\{s\})$ which varies continuously with $s$.
\begin{definition}[Germ of deformation of a foliation]
	Let $\mathcal{F}$ be a foliation of a compact manifold $M$. A \emph{local deformation parametrized by $(S, 0)$ of $\mathcal{F}$} is a foliation $\mathcal{F}^S$ parametrized by $S$ of an open neighborhood $U$ of $M\times \{0\}$ in $M\times S$ such that $\mathcal{F}^0$ is the foliation $\mathcal{F}$ on $M\times \{0\}$. Two such local deformations of $\mathcal{F}$ have the same \emph{germ} if they agree on a neighborhood of $M \times \{ 0\}$ in $M \times S$. A \emph{germ of deformation parametrized by $(S, 0)$ of $\mathcal{F}$} is the germ of a local deformation parametrized by $(S, 0)$ of $\mathcal{F}$. Two germs of foliations $\mathcal{F}_1^S$ and $\mathcal{F}_2^S$ are defined to be \emph{equivalent} if there exist open neighborhoods $U_1$ and $U_2$ of $M\times \{0\}$ in $M\times S$, and a homeomorphism $\phi\colon U_1\to U_2$ that lies in $\mathrm{Homeo}^S(M)$ such that the restriction of $\phi$ to $M\times \{0\}$ is the identity and that $\phi$ maps $\mathcal{F}_1^S|_{U_1}$ to $\mathcal{F}_2^S|_{U_2}$.
\end{definition}

 Bonatti and Haefliger gave an algebraic analog of a germ of deformation of foliation by defining the notion of deformation of holonomy. 
\begin{definition}[Germ of deformation of holonomy for the transversal $T$]
	Let $\mathcal{F}$ be a foliation and $T$ be a complete transversal for $\mathcal{F}$. We define a \emph{germ of deformation (parametrized by $(S, 0)$) of the holonomy of  $\mathcal{F}$ for the transversal $T$} to be a homomorphism $\mathrm{H}^S$ from $\Pi_{\mathcal{F}}(T)$ into $ \utilde{\mathrm{Homeo}}^{(S, 0)}(T)$ that makes the following diagram commutative:
	\begin{eqnarray*}
		\begin{tikzcd}
			&  \utilde{\mathrm{Homeo}}^{(S, 0)}(T) \arrow[d, "\pi"] \\
			\Pi_{\mathcal{F}}(T) \arrow[ru, "\mathrm{H}^S"] \arrow[r, "\mathrm{H}"] & \utilde{\mathrm{Homeo}}(T)
		\end{tikzcd}
	\end{eqnarray*}
\end{definition}

\begin{definition}[Germ of deformation of holonomy]
	Define a \emph{germ of deformation (parametrized by $(S,0)$) of the holonomy of $\mathcal{F}$} as a morphism from $\Pi_\mathcal{F}(T)$ to $\mathrm{Homeo}^{(S,0)}(T)$.
\end{definition}

If $(\mathcal{F}^s)_{s \in S}$ is a deformation of $\mathcal{F}$ parametrized by $S$, then the family $(\mathcal{F}^s)_{s \in S}$ can be seen as a foliation $\mathcal{F}^S$ parametrized by $S$ of $M \times S$ of the same dimension as that of $\mathcal{F}$ such that each $(M \times \text{point})$ is saturated by leaves. In this case $T \times S$ is a transversal for the foliation $\mathcal{F}^S$ of $M \times S$ in a neighborhood of $T \times \{ 0 \}$. It follows from Epstein's second axiom that there is a homomorphism 
\[ \mathrm{H}^S \colon \Pi_{\mathcal{F}}(T) \rightarrow \utilde{\mathrm{Homeo}}^{(S, 0)}(T) \] that to each path $c$ tangent to $\mathcal{F}$ and with endpoints on $T$ assigns the germ of the holonomy of $\mathcal{F}^S$ along $c \times \{ 0 \}$ for the transversal $T \times S$. In other words, a germ of deformation of a foliation defines a germ of deformation of its holonomy. 

In the $C^r$-category for $r>0$, one can similarly define the germ of deformations of a $C^r$ foliation $\mathcal{F}$ and the germ of deformation of the holonomy of $\mathcal{F}$ parametrized by $(S,0)$. And similarly one can define the map $H^S$ in this category to associate to a germ of deformation of $\mathcal{F}$  a germ of deformation of the holonomy of $\mathcal{F}$.  A fundamental result of Bonatti and Haefliger (\cite{bonatti1990deformations}) states that in the $C^r$-category for $r>0$, conversely one can also associate to a germ of deformation of the holonomy of a $C^r$ foliation $\mathcal{F}$  a germ of deformation of $\mathcal{F}$, and the following correspondence holds. 

\begin{thm}[Bonatti--Haefliger]
	Let $\mathcal{F}$ be a $C^r$ foliation ($r \geq 1$) of a compact smooth manifold $M$, and $(S, 0)$ be a pointed locally compact topological space. The natural map $\mathrm{H}^S$, as above, defines a bijection between the set of equivalence classes of germs of deformations parametrized by $(S, 0)$ of the foliation $\mathcal{F}$ and the set of germs of deformations parametrized by $(S,0)$ of the holonomy map $\mathrm{H}$. 
	\label{thm:Haefliger--Bonatti-original}
\end{thm}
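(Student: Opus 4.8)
The plan is to prove that $\mathrm{H}^S$ is a bijection by exhibiting an explicit inverse construction and checking that the two assignments are mutually inverse up to the equivalence relations on both sides. The forward map is already in hand: given a germ of deformation $\mathcal{F}^S$, Epstein's second axiom produces, for each leaf-path $c$ with endpoints on the complete transversal $T$, the germ at $(0,0)$ of the holonomy of $\mathcal{F}^S$ along $c \times \{0\}$, and functoriality of holonomy under concatenation of paths shows that this assignment is a homomorphism $\mathrm{H}^S \colon \Pi_\mathcal{F}(T) \to \utilde{\mathrm{Homeo}}^{(S,0)}(T)$ lifting $\mathrm{H}$. First I would check that this descends to equivalence classes: if $\phi \in \mathrm{Homeo}^S(M)$ restricts to the identity on $M \times \{0\}$ and carries $\mathcal{F}_1^S$ to $\mathcal{F}_2^S$, then $\phi$ induces a germ of transverse homeomorphism of $T \times S$ fixing $T \times \{0\}$ that conjugates $\mathrm{H}_1^S$ to $\mathrm{H}_2^S$. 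Hence equivalent deformations of the foliation yield equivalent deformations of the holonomy, and $\mathrm{H}^S$ is well defined on classes.

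The heart of the argument is surjectivity: reconstructing a foliation from a deformation of holonomy. I would fix a neighborhood scheme $(I, \{\phi_i\}, \{K_i\})$ for $\mathcal{F}$ in which each distinguished chart is a product of a plaque by an arc of $T$, so that $T = \coprod_i T_i$ is a complete transversal whose pieces realise the objects of $\Pi_\mathcal{F}(T)$. Given $\mathrm{H}^S$, for each pair $i,j$ of overlapping charts and each leaf-path class from $T_j$ to $T_i$ represented in the overlap, $\mathrm{H}^S$ assigns a germ of an $S$-family of local homeomorphisms of the transverse coordinate. I would choose representatives $g_{ij} \in \mathrm{Homeo}^S(\mathbb{R}^n)$ of these germs — using local compactness of $(S,0)$ to make the domains uniform for $s$ near $0$ — and set $f_i^s$ to be the product of the plaque projection with the deformed transverse submersion, producing cocycle data $(\{U_i\}, f_i, g_{ij})$ for a foliation $\mathcal{F}^S$ on a neighborhood of $M \times \{0\}$. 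Functoriality of $\mathrm{H}^S$ on $\Pi_\mathcal{F}(T)$ forces the cocycle identity $g_{ij} g_{jk} = g_{ik}$ on triple overlaps, so the data define a germ of deformation with $\mathcal{F}^0 = \mathcal{F}$, and I would then verify that its holonomy deformation is equivalent to $\mathrm{H}^S$.

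I expect the main obstacle to lie precisely in this reconstruction, and this is where the hypothesis $r \geq 1$ is essential. The holonomy lives on the \'etale groupoid, i.e. at the level of germs along leaf-paths, whereas a foliation cocycle requires honest transverse submersions $f_i^s$ whose level sets are plaques and which agree, via $g_{ij}$, on overlaps. Passing from germs to coherent representatives and checking that the resulting transverse distribution is integrable — that one obtains a genuine $C^r$ foliation rather than merely a Haefliger $\Gamma_m^r$-structure — uses differentiability in an essential way: the $C^1$ transverse structure lets one thicken the holonomy germs to a foliated chart and guarantees that the level sets assemble into leaves. It is exactly this integrability step that fails in the $C^0$ setting, which is why the authors must prove the $C^0$ analogue — only the injectivity half — separately in Theorem \ref{thm:Haefliger--Bonatti}.

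Finally, for injectivity, suppose $\mathcal{F}_1^S$ and $\mathcal{F}_2^S$ have equivalent deformations of holonomy, witnessed by a germ of transverse homeomorphism of $T \times S$ restricting to the identity on $T \times \{0\}$ and intertwining $\mathrm{H}_1^S$ and $\mathrm{H}_2^S$. I would build the required $\phi \in \mathrm{Homeo}^S(M)$ by defining it on each transversal $T_i$ via this intertwiner and extending along the plaques using the local product structure of the charts; the intertwining property ensures that the chartwise extensions agree on overlaps, so they glue to a homeomorphism of a neighborhood of $M \times \{0\}$ fixing $M \times \{0\}$ and carrying $\mathcal{F}_1^S$ to $\mathcal{F}_2^S$. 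This shows $\mathrm{H}^S$ is injective on equivalence classes, and together with the previous steps completes the proof that it is a bijection.
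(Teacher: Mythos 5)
You should first note that the paper does not actually prove this statement: Theorem \ref{thm:Haefliger--Bonatti-original} is quoted from Bonatti--Haefliger \cite{bonatti1990deformations} (with \cite{Bonatti1993} as exposition), and the only half the authors reprove --- in the weaker $C^{1,0}$ setting --- is injectivity, as Theorem \ref{thm:Haefliger--Bonatti}. Measured against those proofs, both halves of your sketch have genuine gaps. For surjectivity, the step ``functoriality of $\mathrm{H}^S$ forces the cocycle identity'' does not deliver what you need: $\mathrm{H}^S$ only prescribes \emph{germs} along $T\times\{0\}$, so compactness and \'etaleness give you representatives $g_{ij}$ agreeing near $T\times\{0\}$, but the deformed submersions $f_i^s$ must be defined on all of $U_i^s$, i.e.\ far from the transversal along the plaques, and nothing in your construction produces them there. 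This propagation along leaves is exactly where Bonatti--Haefliger's real work lies, via the microfoliation construction and the \emph{openness of transversality} to a $C^r$ foliation for $r\geq 1$. Your diagnosis that $r\geq 1$ is needed for an ``integrability'' step is therefore misplaced: cocycle data given by honest submersions automatically defines a foliation (there is no Frobenius-type obstruction to check), whereas what fails in the $C^0$ category is that topological transversality is not an open condition --- which is precisely the reason the paper itself gives, just before Theorem \ref{thm:Haefliger--Bonatti}, for why only injectivity survives.

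For injectivity, the claim that ``the intertwining property ensures that the chartwise extensions agree on overlaps, so they glue'' is unjustified and false as stated: the intertwiner is only a germ of a transverse homeomorphism along $T\times\{0\}$, and an extension of it along plaques depends on the chart and on the leaf-path used to reach a point, so two chartwise extensions need not match pointwise. The paper's proof of Theorem \ref{thm:Haefliger--Bonatti} shows what is actually required. First (Step 1) one must reduce \emph{equivalent} morphisms to \emph{equal} homomorphisms by extending the transverse conjugacy $\psi$ to an ambient homeomorphism $\Psi$ of a neighborhood of $M\times\{0\}$; this uses the Edwards--Kirby local contractibility of $\mathrm{Homeo}(T)$ \cite{edwards1971deformations} to damp $\psi$ off a tubular neighborhood of $T$ --- a nontrivial device absent from your sketch. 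Second (Steps 2--4), the conjugating map is defined globally by $\psi^s(x)=\overline{\gamma}^s_x(1)$, where a bounded-length leaf-path $\gamma^s_x$ of $\mathcal{F}_1^s$ from $T$ to $x$ is projected onto $\mathcal{F}_2^s$ along a field of transverse disks (Epstein's second axiom); the well-definedness of this value, i.e.\ its independence of the chosen path, is exactly the content of the Claim in Step 3, proved by a compactness-and-contradiction argument exploiting $\mathrm{H}_1^S=\mathrm{H}_2^S$ --- this is the rigorous replacement for your gluing assertion. Finally one still has to show $\psi^s$ is a homeomorphism, which the paper does via a covering-map degree argument; your sketch omits this entirely.
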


Bonatti and Haefliger prove the surjection by utilizing the notion of microfoliation and openness of the transversality condition to $C^r$-foliations when $r\geq 1$. Given that topological transversality is not an open condition, it would be interesting to see under what condition the $C^0$-case of Bonatti--Haefliger's theorem holds. For $C^{1,0}$ foliations, we show that Bonatti--Haefliger's map is an injection. This will be used in the proof of Theorem \ref{thm:main}. 

\begin{thm} Let $\mathcal{F}$ be a $C^{1,0}$ foliation of a compact smooth manifold $M$, and $(S, 0)$ be a pointed locally compact topological space. The natural map $\mathrm{H}^S$, as above, defines an injective map between the set of equivalence classes of germs of deformations parametrized by $(S, 0)$ of the foliation $\mathcal{F}$ and the set of germs of deformations parametrized by $(S,0)$ of the holonomy map $\mathrm{H}$.
\label{thm:Haefliger--Bonatti}
\end{thm}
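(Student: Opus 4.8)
The plan is to run the injectivity half of the Bonatti--Haefliger correspondence by hand, reconstructing an equivalence of foliation deformations from an equivalence of their holonomy deformations, and checking at each step that only $C^1$-regularity \emph{along leaves} (together with continuity in the transverse and parameter directions) is used. So suppose $\mathcal{F}_1^S$ and $\mathcal{F}_2^S$ are germs of deformations of $\mathcal{F}$ whose induced holonomy deformations $\mathrm{H}_1^S, \mathrm{H}_2^S \colon \Pi_\mathcal{F}(T) \to \utilde{\mathrm{Homeo}}^{(S,0)}(T)$ are equivalent. I would first unpack this hypothesis: after passing to a common refinement of the transversal covers as in \Cref{def: equivalent morphisms}, it yields a germ $\Psi$ at $T \times \{0\}$ of an element of $\mathrm{Homeo}^S(T)$, of the form $(x,s)\mapsto(\psi_s(x),s)$ with $\psi_0 = \mathrm{id}_T$, that conjugates $\mathrm{H}_1^S$ into $\mathrm{H}_2^S$ over $\Pi_\mathcal{F}(T)$. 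This $\Psi$ is the transverse part of the sought equivalence; the entire content of the proof is to spread $\Psi$ out along the leaves into a homeomorphism of a neighborhood of $M \times \{0\}$.

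Next I would fix the local model. Using compactness of $M$, cover it by finitely many distinguished charts $\{U_\alpha\}$ for $\mathcal{F}$ adapted to $T$, so that in each $U_\alpha$ the foliation is a product of a plaque $P_\alpha$ with a piece $T_\alpha$ of the transversal, with plaque projection $\pi_\alpha^s \colon U_\alpha^s \to T_\alpha$ along leaves. By Epstein's second axiom (the only axiom we invoke), for the germ near $s = 0$ each $U_\alpha$ remains a foliation chart for both $\mathcal{F}_1^S$ and $\mathcal{F}_2^S$, with plaque projections $\pi_{\alpha,1}^s$ and $\pi_{\alpha,2}^s$ close to $\pi_\alpha^0$ and with well-defined holonomy germs. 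In a single chart I would define $\phi_\alpha$ as the plaque-preserving map carrying the point of $\mathcal{F}_1^S$ on the plaque through a transversal point $x$ to the point of $\mathcal{F}_2^S$ on the plaque through $\psi_s(x)$; concretely $\phi_\alpha$ is assembled from $\pi_{\alpha,1}^s$, the transverse map $\Psi$, and a continuous section of $\pi_{\alpha,2}^s$. Here the $C^1$-along-leaves hypothesis is what guarantees that the plaques are graphs of functions depending continuously on $(x,s)$, that the projections and sections are continuous, and hence that $\phi_\alpha$ is a homeomorphism germwise near $s=0$ restricting to the identity on $U_\alpha \times \{0\}$, since $\psi_0 = \mathrm{id}$ and $\mathcal{F}_i^0 = \mathcal{F}$.

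I would then glue the $\phi_\alpha$. On an overlap $U_\alpha \cap U_\beta$ the two plaque structures of $\mathcal{F}_i^S$ differ by a holonomy germ, which is exactly the value of $\mathrm{H}_i^S$ on the connecting element of $\Pi_\mathcal{F}(T)$; the intertwining relation between $\Psi$, $\mathrm{H}_1^S$ and $\mathrm{H}_2^S$ forces $\phi_\alpha$ and $\phi_\beta$ to coincide there. Thus the $\phi_\alpha$ assemble into a single germ of homeomorphism $\phi$ of a neighborhood $U_1$ of $M \times \{0\}$ onto a neighborhood $U_2$, lying in $\mathrm{Homeo}^S(M)$, equal to the identity on $M \times \{0\}$, and mapping $\mathcal{F}_1^S$ to $\mathcal{F}_2^S$. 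This exhibits $\mathcal{F}_1^S$ and $\mathcal{F}_2^S$ as equivalent germs of deformations, giving injectivity of $\mathrm{H}^S$.

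The main obstacle is the combination of the local construction and the gluing in the absence of transverse differentiability. Bonatti--Haefliger obtain surjectivity by building a foliation from holonomy data via microfoliations and the \emph{openness} of transversality, and it is precisely this openness that fails in the transverse $C^0$ direction; for injectivity I must instead extract the plaque projections directly from the already-existing leaves of $\mathcal{F}_1^S$ and $\mathcal{F}_2^S$ and verify that sliding $\Psi$ along these plaques produces a genuine homeomorphism. The two delicate points are (i) the continuous dependence of the plaque projections and their sections on $(x,s)$, which is exactly where $C^1$-regularity along leaves enters, since it realizes the plaques as graphs of $C^1$ functions varying continuously with $s$; and (ii) a compactness-and-germ argument ensuring that the finitely many local constructions and the gluing are simultaneously valid on one common neighborhood of $M \times \{0\}$ in $M \times S$, so that $\phi$ is a bona fide germ rather than merely a chartwise collection of maps.
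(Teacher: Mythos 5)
Your plan diverges from the paper's proof at the decisive step, and the divergence hides a genuine gap: the gluing. The intertwining relation between $\Psi$, $\mathrm{H}_1^S$ and $\mathrm{H}_2^S$ constrains only the \emph{transverse} coordinate of your chart maps. Each $\phi_\alpha$ depends on a per-chart choice of section of $\pi_{\alpha,2}^s$, and on an overlap $U_\alpha \cap U_\beta$ the relation forces $\phi_\alpha$ and $\phi_\beta$ to agree only after composing with the plaque projections --- i.e.\ up to a slide along the leaves of $\mathcal{F}_2^s$ --- so nothing in your argument yields pointwise coincidence. Even if you anchor the leafwise position canonically (the paper does this in Step 2 with a field of transverse disks $\{D_x\}_{x\in M}$, requiring the image of a point to stay in the disk through that point), well-definedness becomes a global statement: two different plaque chains (equivalently, two leafwise paths of bounded length from $T$ to the same point $x$) must produce the same projected endpoint in $\mathcal{F}_2^s$. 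The paper isolates exactly this as the Claim in Step 3 and proves it by contradiction: given failures at parameters $s_i \to 0$, it concatenates the two offending paths into $\sigma_i$ of length at most $2l$, uses compactness of $T$, the uniform length bound, and local compactness of $S$ to extract a limit path $\sigma$ tangent to $\mathcal{F}$ with $\mathrm{H}_1^S(\sigma) \neq \mathrm{H}_2^S(\sigma)$, contradicting the (already arranged) equality $\mathrm{H}_1^S = \mathrm{H}_2^S$. Germ-level conjugacy at individual points of $T\times\{0\}$, which is all your setup provides, gives no uniform neighborhood in $s$ on which the chart constructions along arbitrary chains are simultaneously compatible; this compactness argument is where that uniformity is manufactured, and your sketch names it as ``delicate point (ii)'' without supplying it.

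Two further pieces of the paper's proof are missing from your outline. First, the reduction from \emph{equivalent} to \emph{equal} holonomy deformations is not free: from the morphism equivalence (Definition \ref{def: equivalent morphisms}) one first assembles, via the \'etale property of $\mathrm{H}_3^S$ and compactness of $T$, a single transverse homeomorphism $\psi$ near $T\times\{0\}$, and then one must extend $\psi$ to an ambient homeomorphism $\Psi$ of a neighborhood of $M\times\{0\}$ restricting to the identity on $M\times\{0\}$. In the $C^0$ setting this extension is exactly where the paper departs from Bonatti's $C^r$ argument: it invokes the Edwards--Kirby theorem (local contractibility of $\mathrm{Homeo}(T)$) together with a trivialized tubular neighborhood $T\times\mathbb{D}^k$ to interpolate between $\psi_s$ on $T$ and the identity on the boundary of the tube. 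Your proposal folds $\Psi$ directly into the chart maps and so never confronts this, but it then inherits the gluing problem above. Second, even granting a glued map $\phi$ that is a local homeomorphism equal to the identity on $M\times\{0\}$, injectivity of each $\phi^s$ still requires an argument: the paper shows $\psi^s$ is a surjective local homeomorphism of the compact connected $M$, hence a covering, and pins the degree to $1$ by homotopy invariance using local contractibility of $C(M,M)$ and $\psi^0 = \mathrm{id}$. Your observation that only leafwise $C^1$ regularity plus transverse continuity is used is consistent with the paper, but as written the proof does not go through at the gluing step, and the path-projection construction of Steps 2--4 (rather than chartwise gluing) is precisely the mechanism the paper uses to avoid it.
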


\begin{proof}
	The proof of injectivity in Bonatti \cite{Bonatti1993} is given for $C^r$ foliations where $r \geq 1$, but as we will see below, it works with some modification for $C^{1,0}$ foliations as well.  
	
	Choose a complete transversal $T$ such that $T$ is an embedded submanifold with a trivial normal bundle. For example, $T$ can be a union of disjoint $k$-dimensional transverse disks, where $k$ is the codimension of $\mathcal{F}$. 
	
	\textit{Step 1}: Let $\mathcal{F}_1^S$ and $\mathcal{F}_2^S$ be two germs of deformations of $\mathcal{F}$. Denote by $\mathrm{H}_1^S$ and $\mathrm{H}_2^S$ the corresponding germs of deformations of holonomy.  We should prove that if $\mathrm{H}_1^S$ is equivalent (as a morphism between \'{e}tale groupoids) to $\mathrm{H}_2^S$, then the germs $\mathcal{F}_1^S$ and $\mathcal{F}_2^S$ of deformations of $\mathcal{F}$ are equivalent. First, we show that one can assume that $\mathrm{H}_1^S$ and $\mathrm{H}_2^S$ are equal (rather than equivalent). For this, we show that there is a deformation $\mathcal{F}_3^S$ whose germ is equivalent to that of $\mathcal{F}_2^S$ and such that it induces the same germ of deformation of holonomy as the one induced by $\mathcal{F}_1^S$. 
	
	Let $T \coprod T:= \{1\} \times T \cup \{2\} \times T$ be the disjoint union of two copies of $T$. Then $T \coprod T$ is a complete transversal for $\mathcal{F}$. For $i = 1, 2$, let $\varphi_i$ be the inclusion 
	\[ \varphi_i \colon \Pi_T(\mathcal{F}) \hookrightarrow  \Pi_{T \coprod T}(\mathcal{F}) \]
	induced by identifying $T$ with $\{i \} \times T$.
	Since $\mathrm{H}_1^S$ and $\mathrm{H}_2^S$ are equivalent morphisms (Definition \ref{def: equivalent morphisms}), the following holds:  there is a homomorphism 
	\[ \mathrm{H}_3^S \colon  	\Pi_{T \coprod T}(\mathcal{F})  \rightarrow \utilde{\mathrm{Homeo}}^{(S, 0)}(T) \]
	that makes the following diagram commutative
	\[
	\begin{tikzcd}[column sep=tiny]
		& \Pi_T(\mathcal{F}) \ar[dr, "\mathrm{H}_1^S"] \arrow[dl, hook, "\varphi_1"]
		&[1cm]
		\\
		\Pi_{T \coprod T}(\mathcal{F}) \arrow[rr, dashrightarrow, "\mathrm{H}_3^S"]
		&
		&\utilde{\mathrm{Homeo}}^{(S, 0)}(T) 
		\\
		&\Pi_T(\mathcal{F}) \ar[ur, "\mathrm{H}_2^S"']\arrow[ul, hook, "\varphi_2"]
		&
	\end{tikzcd} 
	\]
	Let $\tau \colon T \rightarrow M$ be the embedding of the transversal.  For each $x \in T$ denote by $x_{1,2}$ the constant path in $\Pi_{T \coprod T}(\mathcal{F})$ with source $(1,x)$ and target $(2,x)$. Let $x_{2,1} = (x_{1,2})^{-1}$. 
	
	For each $\gamma \in \Pi_{\mathcal{F}}(T)$ with source $x \in T$ and target $y \in T$ we have
	\[ \varphi_2(\gamma) = x_{2,1} \cdot \varphi_1(\gamma) \cdot y_{1,2}, \]
	where concatenation of paths is written from left to right. Therefore for every $s \in S$ close to $0$ we have 
	\begin{align*}
		\mathrm{H}_2^s(\gamma) & = \mathrm{H}_3^s(\varphi_2(\gamma)) = \mathrm{H}_3^s(x_{2,1} \cdot \varphi_1(\gamma) \cdot y_{1,2})    \\
		& = \mathrm{H}_3^s(y_{1,2}) \circ \mathrm{H}_3^s(\varphi_1(\gamma)) \circ \mathrm{H}_3^s(x_{2,1})   \\
		& = \mathrm{H}_3^s(y_{1,2}) \circ \mathrm{H}_1^s(\gamma) \circ \mathrm{H}_3^s(x_{2,1}) . 
	\end{align*}
Note that the map $x\to x_{12}$ gives a continuous embedding of $T$ into $\Pi_{T \coprod T}(\mathcal{F})$, and $\mathrm{H}_3^S(x_{1,2})$ is a germ of a homeomorphism in $\mathrm{Homeo}^{(S,0)}(T)$ at $(x,0) \in T \times S$ that fixes $(x,0)$. Since $\mathrm{H}_3^S$ is an {\' e}tale map between {\' e}tale groupoids, there exists an open neighborhood $U_x$ in $T\times S$ around each $(x, 0) \in T\times\{0\}$  and a local homeomorphism $\psi_{U_x}$ in $\mathrm{Homeo}^{(S,0)}(T)$ such that its germ at every $z\in (T\times\{0\})\cap U_x$ is the germ $\mathrm{H}_3^S(z_{1,2})$. 

In particular, the restriction of $\psi_{U_x}$ to $(T\times\{0\})\cap U_x$ is the identity. Given that $T$ is compact, finitely many such neighborhoods $U_x$ cover $T \times \{ 0\}$ and so we can choose a small neighborhood $U$ of $T \times 0$ in $T \times S$ and a homeomorphism $\psi$ of $U$ into an open subset of $T \times S$ such that for each $x \in T$ the germ of $\psi$ at $(x, 0)$ is equal to $\mathrm{H}_3^S(x_{1,2})$. 
	
	Then for each $\gamma \in \Pi_{\mathcal{F}}(T)$ we have 
	\[ \mathrm{H}_2^S(\gamma) = \psi \circ \mathrm{H}_1^s(\gamma) \circ \psi^{-1}. \]
	
	\textit{Claim}: There exists a neighborhood $\mathcal{V}$ of $M \times \{ 0\}$ in $M \times S$ and a homeomorphism $\Psi$ of $\mathcal{V}$ into an open subset of $M \times S$ such that the restriction of $\Psi $ to $M \times \{ 0\}$ is the identity and such that there exists a neighborhood $V \subset U$ of $T \times \{ 0\}$ in $T \times S$ where $\Psi(V) \subset T \times S$ and the restriction of $\Psi $ to $V$ coincides with $\psi$.
	
	\textit{Proof of Claim}: Here is the part that the proof differs from that of \cite{Bonatti1993}. By Edwards--Kirby theorem, the group $\mathrm{Homeo}(T)$ is locally contractible. Moreover, the point whose neighborhood is being contracted can be left fixed during the contraction. See \cite[Corollary 1.1]{edwards1971deformations} and the proceeding remark.  Let $\mathcal{U}$ be a small neighborhood of the identity $\mathrm{Id}$ in $\mathrm{Homeo}(T)$, and $G \colon \mathcal{U} \times [0,1] \rightarrow \mathcal{U}$ be a contraction such that 
	\begin{align*}
		&G(\phi , 0) = \phi, \hspace{3mm} G(\phi, 1) = \mathrm{Id} \hspace{6mm} \forall \phi \in \mathcal{U}, \\
		& G(\mathrm{Id}, t) = \mathrm{Id} \hspace{6mm} \forall t \in [0,1].  
	\end{align*}

	Since $T$ is embedded, it has a closed tubular neighborhood $\mathcal{N}$. By our assumption on $T$, we can identify $\mathcal{N}$ with the trivial bundle $T \times \mathbb{D}^k$ over $T$, where $\mathbb{D}^k$ is the unit disk in $\mathbb{R}^k$. For each $s$ close to $0$ in $S$, let $\psi_s \in \mathrm{Homeo}(T)$ be the restriction of $\psi$ to $T \times \{ s\}$. We extend $\psi_s$ to a homeomorphism $\Psi_s$ of $M$ as follows. Define the restriction of $\Psi_s$ to $M - \mathcal{N}$ to be the identity map. Denote the Euclidean norm of a point $z \in \mathbb{D}^k$ by $|z|$. For each $(x, z) \in T \times \mathbb{D}^k$ define 
	\[ \Psi_s(x,z) = (G(\psi_s, |z|)(x) , z). \]
	Then we have
	\begin{align*}
		&|z|= 1 \implies \Psi_s(x , z) = (x,z), \\ 
		&z=0 \implies \Psi_s(x,0) = (\psi_s(x), 0).
	\end{align*}
	Therefore, we can define the restriction of $\Psi$ to $M \times \{s\}$ to be equal to $\Psi_s$. This proves the \textit{Claim}.
		
	Let $\mathcal{F}_3^S = \Psi(\mathcal{F}_2^S)$. This is a deformation of $\mathcal{F}$ where $\mathcal{F}_3^0=\mathcal{F}$, since the restriction of $\Psi$ to $M \times \{ 0\}$ is the identity. The deformation of holonomy of $\mathcal{F}$ for the transversal $T$ induced by $\mathcal{F}_3^S$ is exactly $\mathrm{H}_1^S$. Then $\mathcal{F}_3^S$ is equivalent to the germ $\mathcal{F}_2^S$ of deformation of $\mathcal{F}$, with the homeomorphism $\Psi$ inducing the equivalence, and the germ of deformation of holonomy of $\mathcal{F}$ induced by $\mathcal{F}_3^S$ is equal to that of $\mathcal{F}_1^S$.  This completes the construction of $\mathcal{F}_3^s$. Therefore we can assume that 
	\[ \mathrm{H}_1^S = \mathrm{H}_2^S \colon \Pi_T(\mathcal{F}) \rightarrow \utilde{\mathrm{Homeo}}^{(S, 0)}(T).\]	
	
	\textit{Step 2}: Next we define a field of disks transverse to $\mathcal{F}$. Let $q \colon N \rightarrow M$ be the normal bundle of the foliation $\mathcal{F}$, and $\sigma \colon M \rightarrow N$ be the zero section. Consider a neighborhood $U$ of the zero section $\sigma(M)$ and a submersion $\phi \colon U \rightarrow M$ coinciding with $q$ on $M$ (that is $\phi \circ \sigma = \mathrm{id}_M$) and such that the restriction of each fiber $q^{-1}(x) \cap U$ is an embedded transversal to the foliation $\mathcal{F}$. For example, fixing a Riemannian metric on $M$, the map $\phi$ can be defined via the exponential function. By choosing the neighborhood $U$ small enough, the fibers $D_x = q^{-1}(x) \cap U$ are $k$-dimensional disks transverse to $\mathcal{F}$, where $k$ is the codimension of the foliation $\mathcal{F}$. The family $\{ D_x\}_{x\in M}$ is a \emph{field of disks transverse to $\mathcal{F}$}. We may assume that for each $x \in T$, the disk $D_x$ is included in $T$. For $s$ close to $0$, the foliations $\mathcal{F}_1^s$ and $\mathcal{F}_2^s$ are transverse to the field of disks $\{D_x\}_{x \in M}$.

	\textit{Step 3}: Choose a Riemannian metric on $M$. By compactness of $M$ and local compactness of $S$, there exists $l >0$ and a neighborhood $S_0$ of $0$ in $S$ such that for every $s \in S_0$ the following two properties hold:
	\begin{enumerate}
		\item For each $x \in M$ there exists a path $\gamma_x^s$ tangent to the foliation $\mathcal{F}_1^s$ such that $\gamma_x^s(0) \in T$ and $\gamma_x^s(1) = x$ and the length $\ell(\gamma_x^s)$ is at most $l$. 
		
		\item Each path $\gamma$ tangent to $\mathcal{F}_1^s$ and of length $\ell(\gamma) \leq 2 l$ projects to a path $\overline{\gamma}$ tangent to $\mathcal{F}_2^s$ such that $\overline{\gamma}(0)= \gamma(0)$ and $\overline{\gamma}(t) \in D_{\gamma(t)}$ for every $t \in [0,1]$. Here we use Epstein's second axiom. 
	\end{enumerate}
	
	\textit{Claim}: There exists a neighborhood $S_1 \subset S_0$ of $0$ in $S$ such that for each $s \in S_1$ the following holds: Let $\gamma_1$ and $\gamma_2$ be two paths tangent to $\mathcal{F}_1^s$ such that $\gamma_1(0)$ and $\gamma_2(0)$ lie on $T$, and $\gamma_1(1)=\gamma_2(1)$, and such that the lengths $\ell(\gamma_1)$ and $\ell(\gamma_2)$ are at most $l$. Let $\overline{\gamma}_1$ and $\overline{\gamma}_2$ be the projections of $\gamma_1$ and $\gamma_2$ to $\mathcal{F}_2^s$ along the field of disks $\{ D_x \}_{x \in M}$. Then we have $\overline{\gamma}_1(1) = \overline{\gamma}_2(1)$.
	
	\textit{Proof of Claim}: Assume to the contrary, and suppose that there exists a sequence of $s_i \in S$ converging to $0$ such that for each $i$ there are paths $\gamma_1^i$ and $\gamma_2^i$ tangent to $\mathcal{F}_1^s$ and satisfying the hypothesis of the claim and such that $\overline{\gamma}_1^i(1) \neq \overline{\gamma}_2^i(1)$. Let $\sigma_i = \gamma_1^i (\gamma_2^i)^{-1}$, where concatenation is from left to right. Then $\sigma_i$ is a path tangent to $\mathcal{F}_1^i$ and of length at most $2l$. Since $s_i \in S_0$, we can project $\sigma_i$ to a path $\overline{\sigma}_i$ tangent to $\mathcal{F}_2^i$ and satisfying $\overline{\sigma}_i(0)=\sigma_i(0)$ and $\overline{\sigma}_i(1) \in D_{\sigma_i(1)} \subset T$. We first argue that $\overline{\sigma}_i(1) \neq \sigma_i(1) $: otherwise we must have $\overline{\gamma}_1^i(1) = \overline{\gamma}_2^i(1)$ (and $\overline{\sigma}_i = \overline{\gamma}_1^i (\overline{\gamma}_2^i)^{-1}$) which is not the case by hypothesis. Hence we established that $\overline{\sigma}_i(1) \neq \sigma_i(1) $. Since $\sigma_i$ have lengths bounded by $2 l$, and have endpoints on the compact transversal $T$, and $S$ is locally compact,  after passing to a subsequence we may assume that $\sigma_i$ converge to a path $\sigma$ tangent to $\mathcal{F}_1^0 = \mathcal{F}$, with endpoints on $T$, and of length $\ell(\sigma) \leq 2l$. We show that $\mathrm{H}_1^S(\sigma) \neq \mathrm{H}_2^S(\sigma)$ to arrive at a contradiction. This is because for each realizations $h_1$ and $h_2$ of the germs $\mathrm{H}_1^S$ and $\mathrm{H}_2^S$ and for $i$ large enough we have
	\[ h_1(\sigma_i(0)) = \sigma_i(1) \neq \overline{\sigma}_i(1) = h_2(\sigma_i(0)).  \]
	The contradiction $\mathrm{H}_1^S \neq \mathrm{H}_2^S$ proves the \textit{Claim}. 
	
	\textit{Step 4}: For each $x \in M$ and each $s \in S_1$, there exists a path $\gamma_x^s$ tangent to $\mathcal{F}_1^s$, starting on $T$ and terminating at $x$, and with length $\ell(\gamma_x^s) \leq l$. By the above \emph{Claim}, the endpoint of the projection $\overline{\gamma}_x^s$ of $\gamma_x^s$ to $\mathcal{F}_2^s$ does not depend on the choice of $\gamma_x^s$. Define $\psi^s(x) = \overline{\gamma}_x^s(1)$. 
	
	We show that for $s$ close to $0$, the map $\psi^s$ is a homeomorphism of $M$. Note that $\psi^s$ is a local homeomorphism: if $\psi^s(x)$ is defined via the path $\gamma_x^s$ tangent to $\mathcal{F}_1^s$, then we can use paths tangent to $\mathcal{F}_1^s$ and almost parallel to $\gamma_x^s$ to define $\psi^s(y)$ for $y$ close to $x$. Since $\psi^s$ is continuous and $M$ is compact, the image of $\psi^s$ is compact and hence a closed subset of $M$. On the other hand, the image of $\psi^s$ is open since $\psi^s$ is a local homeomorphism. Therefore, the image of $\psi^s$ is all of $M$, as $M$ is connected. Now $\psi^s$ being a local homeomorphism and also surjective, it is a covering map. It is enough to argue that the degree of $\psi^s$ is equal to one. Note that the space of continuous maps $C(M,M)$ is a Banach manifold, and so it is locally contractible. Since $\psi^0$ is the identity map, $\psi^s$ lie in a small neighborhood of $\psi^0$, and the degree is invariant under homotopy, it follows that the degree of $\psi^s$ is equal to one. This shows that for $s$ close to $0$, the map $\psi^s$ is a homeomorphism of $M$. Moreover, $\psi^s$ continuously varies with $s$ (by Epstein's second axiom), the map $\psi^0$ is the identity, and $\psi^s(\mathcal{F}_1^s) = \mathcal{F}_2^s$. Therefore, the germs of $\mathcal{F}_1^S$ and $\mathcal{F}_2^S$ are equivalent. 	
\end{proof}

\section{Proof of Theorem \ref{thm:main}}

\begin{proof}[Proof of Theorem \ref{thm:main}]

Let $\mathcal{F}$ be a codimension-one transversely oriented $C^{1,0}$ foliation of a compact manifold $M$, and let $L$ be a compact leaf of $\mathcal{F}$ such that $\pi_1(L)$ is amenable and $H^1(L; \mathbb{R})=0$. Here we have suppressed the base point from $\pi_1(L)$. By Witte-Morris and Navas, we know that $L$ is globally stable and so $(M, \mathcal{F})$ is either the product foliation $L \times [0,1]$ or $(M, \mathcal{F})$ is the foliation induced by fibers of a fibration over $S^1$ with fiber $L$. Let $T$ be a complete transversal for $\mathcal{F}$ which is either
\begin{enumerate}
	\item[i)] $\text{point} \times [0,1]$ if $(M, \mathcal{F})$ is the product foliation $L \times [0,1]$; or
	\item[ii)] a circle that intersects every leaf once if $(M, \mathcal{F})$ is a fibration over $S^1$ with fiber $L$.
\end{enumerate}
We show that if $\mathcal{F}_n$ is a sequence of transversely oriented codimension-one $C^{1,0}$ foliations that $C^0$-approximate $\mathcal{F}$, then $\mathcal{F}_n$ is topologically equivalent to $\mathcal{F}$ for large $n$. 

Let $S$  be the topological space $\{ 0\} \cup \{ \frac{1}{n} | n \in \mathbb{N}\} \subset \mathbb{R}$ with the subspace topology. Let $\mathcal{F}^S$ be the foliation of $M \times S$ such that the restriction of $\mathcal{F}^S$ to $M \times \{ \frac{1}{n}\}$ is $\mathcal{F}_n$, and the restriction of $\mathcal{F}^S$ to $M \times \{ 0\}$ is $\mathcal{F}$. Then $\mathcal{F}^S$ defines a germ of deformation of $\mathcal{F}$. Denote by $\Pi_T({\mathcal{F}})$ the fundamental groupoid of $\mathcal{F}$ for the transversal $T$. Let $\mathrm{Homeo}_+^{(S,0)}(T)$ be the pseudogroup of local homeomorphisms of $T \times S$ of the form $(h_s(x), s)$ where $h_s$ is a local orientation-preserving homeomorphism of $T$ continuously varying with $s$. Denote by $\utilde{\mathrm{Homeo}}_+^{(S,0)}(T)$ the \'{e}tale  groupoid of germs of elements of $\mathrm{Homeo}_+^{(S, 0)}(T)$ at points of $T \times \{ 0\}$. Let 
\[ \mathrm{H}^S \colon \Pi_T(\mathcal{F}) \rightarrow \utilde{\mathrm{Homeo}}_+^{(S,0)}(T) \]
be the germ of deformation of the holonomy induced by the germ of deformation $\mathcal{F}^S$ of the foliation $\mathcal{F}$. By Theorem \ref{thm:Haefliger--Bonatti}, it is enough to show that $H^S$ is equivalent (as a morphism of \'{e}tale groupoids) to the germ of deformation of holonomy induced by the germ of trivial (i.e. constant) deformation $\mathcal{F}^S_c$ of $\mathcal{F}$
\[ \mathrm{H}^S_c  \colon \Pi_T(\mathcal{F}) \rightarrow \utilde{\mathrm{Homeo}}_+^{(S,0)}(T),  \]
where the restriction of $\mathcal{F}^S_c$ to each $M \times \{ s\}$ is $\mathcal{F}$.

Note that by i)--ii), for every $t \in T$ there is a copy of the fundamental group $\pi_1(L)$ in $\Pi_T(\mathcal{F})$ by restricting to those paths that start and end on $t$ and remain inside a leaf; denote this copy of $\pi_1(L)$ by $\pi_1(L \times t)$. Since $\mathrm{H}^S$ is a homomorphism of \'{e}tale groupoids, and $\pi_1(L \times t)$ is a group, the image of $\pi_1(L \times t)$ under $\mathrm{H}^S$ is a group. Moreover this image $\mathrm{H}^S(\pi_1(L \times t))$ is an amenable group since it is a quotient of an amenable group. On the other hand, the following diagram is commutative
\begin{eqnarray*}
	\begin{tikzcd}
		&  \utilde{\mathrm{Homeo}}_+^{(S, 0)}(T) \arrow[d, "\pi"] \\
		\Pi_{\mathcal{F}}(T) \arrow[ru, "\mathrm{H}^S"] \arrow[r, "\mathrm{H}"] & \utilde{\mathrm{Homeo}}_+(T)
	\end{tikzcd}
\end{eqnarray*}
where $\mathrm{H}$ is the holonomy homomorphism for $\mathcal{F}$, and $\pi$ is the natural projection from $\utilde{\mathrm{Homeo}}_+^{(S, 0)}(T)$ to $\utilde{\mathrm{Homeo}}_+(T)$. Therefore, the image $\mathrm{H}^S(\pi_1(L \times t))$ lies in the subgroup $G_t$ of $\utilde{\mathrm{Homeo}}_+^{(S, 0)}(T)$ consisting of germs of homeomorphisms of $T \times S$ at the point $t \times 0$. Note that $G_t$ is isomorphic to the group $\utilde{\mathrm{Homeo}}_+^{(S,0)}(\mathbb{R}, 0)$, and hence $G_t$ is left orderable. It follows that $\mathrm{H}^S(\pi_1(L \times t))$ is a left orderable group, and by Witte-Morris' theorem, it should be the trivial group. This implies that $\mathrm{H}^S(\pi_1(L \times t))$ consists only of the germ of the identity homeomorphism of $T \times S$ at the point $(t, 0)$. Since this holds for every $t \in T$, it follows that the homomorphism $\mathrm{H}^S$ is equal to (and hence also equivalent to) $\mathrm{H}^S_c$. This completes the proof. 
\end{proof}

\section{Questions}

A countable group $G$ is left orderable if and only if it is a subgroup of $\mathrm{Homeo}_+(\mathbb{R})$. Therefore, Navas' theorem shows that every countable subgroup of $\utilde{\mathrm{Homeo}}_+(\mathbb{R}, 0)$ is also a subgroup of $\mathrm{Homeo}_+(\mathbb{R})$. Andres Navas has asked the following:

\begin{question}[Navas]
	Is every countable subgroup of $\utilde{\mathrm{Homeo}_+}(\mathbb{R}^2, 0)$ also a subgroup of $\mathrm{Homeo}_+(\mathbb{R}^2, 0)$?
\end{question}

More generally one can ask the following:

\begin{question}
	Do any of the equivalences in Corollary \ref{thm:nontrivial homomorphism} hold for $\mathbb{R}^n$ for $n>1$?
\end{question}

\bibliographystyle{alpha}
\bibliography{foliation-reference}
	
\end{document}